\theoremstyle{plain}
\newtheorem{theorem}{Theorem}[section]     
\newtheorem{lemma}[theorem]{Lemma}
\newtheorem{ex}[theorem]{Example}
\newtheorem{proposition}[theorem]{Proposition}
\newtheorem{define}[theorem]{Definition}
\theoremstyle{definition}
\newtheorem*{remark}{Remark}
\newenvironment{proofof}[1]{\noindent{\it Proof of
#1.}}{\hfill$\square$\\\mbox{}}
\newcommand{\ord}{\mathop{\rm ord}\nolimits}
\newcommand{\rank}{\mathop{\rm rank}\nolimits}
\newcommand{\md}{\mathop{\rm mod}\nolimits}
\newcommand{\supp}{\mathop{\rm supp}\nolimits}
\newcommand{\kn}{\mathop{\rm \uptheta}\nolimits}
\newcommand{\also}{\mathop{a}\nolimits}
\newcommand{\egyes}{\mathop{v}\nolimits}
\newcommand{\tort}{\mathop{\ell}\nolimits}
\newcommand{\linkomb}{\mathop{b}\nolimits}
\begin{document}
\title[Separating Noether number]{The separating Noether number of abelian groups of rank two}
\author[Barna Schefler]{Barna Schefler}
\address{E\"otv\"os Lor\'and University, 
P\'azm\'any P\'eter s\'et\'any 1/C, 1117 Budapest, Hungary} 
\email{scheflerbarna@yahoo.com}
\thanks{Partially supported by the Hungarian National Research, Development and Innovation Office,  NKFIH K 138828.}
\subjclass[2020]{Primary 13A50; Secondary 11B75, 20D60}
\keywords{Separating invariants, Noether number, zero-sum sequences, finite abelian groups, Davenport constant.}

\maketitle

\begin{abstract} 
The exact value of the separating Noether number of an arbitrary finite abelian group of rank two is determined. This is done by a detailed study of the monoid of zero-sum sequences over the group. 
\end{abstract}

\section{Introduction} \label{sec:intro} 

\subsection*{Separating invariants}
The \emph{separating Noether number} 
$\beta_{sep}(G)$ 
of a finite group $G$ was introduced in \cite{kohls-kraft} as the minimal positive integer $d$ such that for any 
finite dimensional complex representation of $G$, 
the homogeneous polynomial $G$-invariants of degree at most $d$ form a separating set. The present paper fits into the program of determining $\beta_{sep}(G)$ for finite abelian groups $G$. 

Let $V$ be a finite dimensional vector space over $\mathbb{C}$ endowed with an action of $G$ via linear transformations. The algebra $\mathbb{C}[V]$ of polynomial functions on $V$ contains the subalgebra 
$\mathbb{C}[V]^G$ of $G$-invariants 
(i.e. polynomial functions constant along the $G$-orbits). 
By \cite[Theorem 3.12.1]{derksen-kemper} for any $v,w\in V$ for which $Gv\neq Gw$, there exists a homogeneous polynomial invariant $f\in \mathbb{C}[V]^G$ of degree at most $|G|$ with $f(v)\neq f(w)$. 
Denote by $\beta_{sep}(G,V)$ the minimal positive integer $d$ such that for any 
$v,w\in V$ with $Gv\neq Gw$, there exists a homogeneous $f\in \mathbb{C}[V]^G$ of degree at most $d$ satisfying $f(v)\neq f(w)$. Set  
\[\beta_{sep}(G):=\underset{V}{\sup}\{\beta_{sep}(G,V) \mid V \text{ is a finite dimensional }G\text{-module}\}. \]
So by \cite[Theorem 3.12.1]{derksen-kemper} we have $\beta_{sep}(G)\le |G|$. 

Denote by $C_n$ the cyclic group of order $n$. Our main result gives the exact value of the separating Noether number for any finite abelian group $C_{\tort n}\oplus C_n$ of rank two: 

\begin{theorem}\label{mainthm}
Let $\tort,n$ be positive integers and denote by $p$ the minimal prime divisor of $n$. Then we have: 
\[\beta_{sep}(C_{\tort n}\oplus C_{n})=\tort n+\frac{n}{p}\]
\end{theorem}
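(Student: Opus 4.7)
Following the program announced in the abstract, the plan is to translate $\beta_{sep}(G)$ into a combinatorial invariant of the monoid $\mathcal{B}(G)$ of zero-sum sequences over $G=C_{\tort n}\oplus C_n$. Any finite-dimensional representation of an abelian $G$ decomposes as $V=\bigoplus V_{\chi_i}$ into one-dimensional character spaces, and the polynomial invariants are spanned by monomials $x_{i_1}\cdots x_{i_k}$ whose character sequence $(\chi_{i_1},\ldots,\chi_{i_k})$ sums to zero in $\hat G\cong G$. The initial step is to reformulate $\beta_{sep}(G)$ as the largest integer $d$ for which there exists a zero-sum sequence of length $d$ whose associated monomial is not redundant as a separator of orbits, i.e.\ cannot be replaced by a product of strictly shorter invariant monomials that together distinguish the same pairs of orbits. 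This recasts the theorem as a purely combinatorial statement about $\mathcal{B}(G)$.

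\textbf{Lower bound.} To prove $\beta_{sep}(C_{\tort n}\oplus C_n)\geq \tort n+n/p$, I would exhibit a representation $V$ and two vectors $v,w\in V$ in distinct $G$-orbits on which every invariant of degree at most $\tort n+n/p-1$ agrees. With $e_1,e_2$ generating the two cyclic factors and $h:=(n/p)e_2$ the canonical order-$p$ element, the natural witness is a carefully constructed minimal zero-sum sequence of length $\tort n+n/p$ built from a long run of $e_1$, a ``mixed'' element combining the two generators, and $n/p$ further factors drawn from the subgroup $\langle h\rangle$ together with a suitable coset representative. One then defines $V$ so that its characters are exactly the support of this sequence and chooses $v,w$ so that the monomial attached to the sequence records a genuinely new invariant feature of the pair while every shorter zero-sum combination on the available characters evaluates equally on $v$ and $w$.

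\textbf{Upper bound and main obstacle.} For the reverse inequality, given vectors $v,w$ with $Gv\neq Gw$ in an arbitrary representation, one must produce a separating invariant of degree at most $\tort n+n/p$. Combinatorially, this reduces to showing that every zero-sum sequence of length exceeding $\tort n+n/p$ whose associated monomial separates $v$ from $w$ admits a factorization into strictly shorter zero-sum subsequences at least one of which still separates them. The strategy is inductive and exploits the subgroup $H\leq G$ of index $p$: by counting how many elements of the long sequence lie in each coset of $H$ and using smaller separating bounds for $H$ and for the quotient, one peels off short zero-sum factors inheriting the desired separating property. The main obstacle is the boundary regime in which almost all elements of the long sequence lie in a single coset of $H$, forcing a rigid combinatorial structure with limited room to splice; treating this case is exactly where the minimality of $p$ among the prime divisors of $n$ is essential and where the sharp bound $n/p$, rather than something larger, emerges.
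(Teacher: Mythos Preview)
Your proposal has the right opening move---translating $\beta_{sep}(G)$ into a statement about zero-sum sequences---but it misses the precise reduction the paper relies on, and without that reduction the rest of the sketch is not a workable strategy.

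The paper invokes Lemma~\ref{md2} (\cite[Corollary~2.6]{domokos_abelian}): $\beta_{sep}(G)$ equals the maximal length of a \emph{group atom} in $\mathcal{B}(g_1,\dots,g_k)$ as $\{g_1,\dots,g_k\}$ ranges over subsets of $G$ of size at most $\rank(G)+1$. Here a group atom is an element that is not an \emph{integral linear combination} of strictly shorter elements of the block monoid. Your phrase ``cannot be replaced by a product of strictly shorter invariant monomials that together distinguish the same pairs of orbits'' is neither this condition nor obviously equivalent to it; it confuses multiplicative factorization with $\mathbb{Z}$-linear span. More importantly, you never restrict to $k\le 3$, and that restriction is what makes the upper bound tractable: the entire technical content of Sections~\ref{sec:prep}--\ref{sec:proof} is a case analysis of group atoms in three-generator block monoids $\mathcal{B}(g_1,g_2,g_3)$.

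Your proposed upper-bound strategy---pass to an index-$p$ subgroup $H$, count cosets, induct on $|G|$---is not what the paper does and, as written, does not go through. The paper never inducts on the group; instead it shows directly that any group atom $\mathsf{m}\in\mathcal{B}(g_1,g_2,g_3)$ with $|\mathsf{m}|>\tort n$ forces $\ord(g_1)=\ord(g_2)=\tort n$ (Proposition~\ref{mgeqkn}), then splits on the parity of $\tort n$. For $\tort n$ odd, an explicit construction of half-length witnesses (Proposition~\ref{knodd} together with Lemma~\ref{even}) kills all parity patterns of $(m_1,m_2,m_3)$. For $\tort n$ even, a sharpened version of Lemma~\ref{m*geqm}---namely Lemma~\ref{ord}, which exploits the common cyclic subgroup $H\le\langle g_1\rangle\cap\langle g_2\rangle\cap\langle g_3\rangle$ of order $\kn$---yields $|\mathsf{m}|\le \tort n+n/p$ directly. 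The prime $p$ enters not through a coset-counting pigeonhole as you suggest, but through Lemma~\ref{kn+gcd} (quoted from \cite{schefler_c_n^r}) in the subcase $\ord(g_3)=n$, and through $p=2$ when $n$ is even. Your ``main obstacle'' paragraph does not describe the actual obstacle, which is the case $\ord(g_1)=\ord(g_2)=\ord(g_3)=\tort n$ with $\tort n$ odd.
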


The definition of $\beta_{\mathrm{sep}}(G)$ is modeled on the \emph{Noether number} $\beta(G)$,  defined in \cite{schmid} as the maximal degree in a minimal homogeneous generating system of the algebras $\mathbb{C}[V]^G$, where $V$ ranges over all finite dimensional representations of $G$. The famous theorem of Noether asserts that  $\beta(G)\le |G|$. 
The exact value of the Noether number or the separating Noether number is known only for a few families of groups other than those covered by 
Theorem~\ref{mainthm} (see \cite{cziszter-domokos-szollosi}, \cite{domokos_abelian}, \cite{schefler_c_n^r}, \cite{3d+3+3}).

\subsection*{Davenport constant} By \cite[Corollary 2.6.]{domokos_abelian} 
(see Lemma~\ref{md2} of the present paper) Theorem~\ref{mainthm} translates to a question on zero-sum sequences over 
$C_{\tort n}\oplus C_{n}$. 

It is well known (see for example \cite{schmid}) that for a finite  abelian group $G$, $\beta(G)$  coincides with the \emph{Davenport constant} $\mathsf{D}(G)$ of $G$ (the maximal length of an irreducible zero-sum sequence over $G$). This observation highlights an interesting  connection between invariant theory and the theory of zero-sum sequences (see \cite{cziszter-domokos-geroldinger} for more information on this topic).  

A long-standing open question about the Davenport constant is a conjecture stating that for the direct sum $C_n^r$ of $r$ copies of the cyclic group of order $n$, the equality $D(C_n^r)=1+r(n-1)$ (or equivalently, 
$\beta(C_n^r)=1+r(n-1)$) holds (see \cite{gaogeroldinger1}, \cite{girard}). 
The analogous question on the separating Noether number was answered recently in 
\cite{schefler_c_n^r}, where 
we calculated the exact value of 
$\beta_{sep}(C_n^r)$, see Theorem \ref{ptlns} of the present paper for the result.\par 

On the other hand, the most common families of finite abelian groups for which the exact value of the Davenport constant (hence the Noether number) is known are the finite abelian groups of rank two and the finite abelian $p$-groups 
(\cite{rank2}, \cite{p-group}; see also \cite{d^*} for further information). 
It is therefore natural to aim at computing $\beta_{sep}(G)$ for rank two abelian groups as we do here. 
Interestingly, this seems to be technically more involved than the 
study of the separating Noether number of 
$C_n^r$ (whose Noether number is still unknown in general). 
The methods of this paper will be used in a subsequent paper \cite{schefler_rank3} 
to determine the separating Noether number for rank three finite abelian groups. 

\subsection*{Outline of the present paper}
The interpretation of the separating Noether number of a finite abelian group $G$ in terms of zero-sum sequences over $G$ is reviewed in Section~\ref{sec:separating-Davenport},  where we also recall some results from \cite{schefler_c_n^r} that are used here.  Key technical contributions of the present work are Proposition \ref{knodd} in Section \ref{sec:prep} and Lemma \ref{ord} in Section \ref{sec:lemma}. Their consequences are applied in  Section \ref{sec:proof} to  complete the proof of Theorem \ref{mainthm}.

\section{Preliminaries}\label{sec:separating-Davenport}
Let $g_1,...,g_k$ be distinct elements of the additively written finite abelian group $G$. Then 
\begin{center}
    $\mathcal{G}(g_1,...,g_k):=\{[m_1,...,m_k]\in\mathbb{Z}^k:\sum m_ig_i=0\in G\}$\\
\end{center}
is a subgroup of the additive group of $\mathbb{Z}^k$. Its submonoid 
\begin{center}
    $\mathcal{B}(g_1,...,g_k):=\mathbb{N}^k\cap \mathcal{G}(g_1,...,g_k)$
\end{center}   
is called \emph{block monoid}, see \cite[Definition 2.5.5]{ghk}. In the particular case when $g_1,...,g_k$ are all the elements of $G$, by slight abuse of notation we write $\mathcal{B}(G):=\mathcal{B}(g_1,...,g_k)$.\par 

The \emph{length} of an element $\mathsf{m}=[m_1,...,m_k]\in\mathcal{B}(g_1,...,g_k)$ is $|\mathsf{m}|=\sum_{i=1}^k m_i$. An element of the monoid $\mathcal{B}(g_1,...,g_k)$ that can not be written as the sum of two non-zero elements of the monoid is called an \emph{atom}. Let $\{g_1,...,g_k\}\subset\{g_0,g_1,...,g_k\}$, then each element $\mathsf{m_0}=[m_1,...,m_k]$ of the monoid $\mathcal{B}(g_1,...,g_k)$ can be identified with the element $\mathsf{m}=[0,m_1,...,m_k]$ of the monoid $\mathcal{B}(g_0,g_1,...,g_k)$. If $\mathsf{m_0}\in\mathcal{B}(g_1,...,g_k)$ is an atom, then $\mathsf{m}\in\mathcal{B}(g_0,g_1,...,g_k)$ is also an atom. By iterating this idea we get that each atom of a monoid $\mathcal{B}(g_1,...,g_k)$ can be identified with an atom of $\mathcal{B}(G)$. The maximal length of an atom in the monoid $\mathcal{B}(G)$ is denoted by $\mathsf{D}(G)$. Thus we get the definition of the Davenport constant of the abelian group $G$.\par

The support of an element is:
\begin{center}
    $\supp(\mathsf{m})=\{i\in\{1,...,k\}: m_i\neq 0\}$
\end{center}
The size of the support is $|\supp(\mathsf{m})|$. The $i^{th}$ standard basis vector of $\mathbb{Z}^k$ will be denoted by $\mathsf{e_i}$, hence $\ord(g_i)\mathsf{e_i}\in \mathcal{B}(g_1,...,g_k)$, where $\ord(g_i)$ the order of $g_i$ in $G$. This implies that  $\mathcal{B}(g_1,...,g_k)$ generates $\mathcal{G}(g_1,...,g_k)$ as a group.

\begin{remark}
One can think of $\mathcal{B}(G)$ as the monoid of zero-sum sequences over $G$. An element $\mathsf{m}=[m_1,...,m_k]\in \mathcal{B}(G)=\mathcal{B}(g_1,...,g_k)$ corresponds to the sequence over $G$ containing $g_i\in G$ with multiplicity $m_i$. For a survey on zero-sum sequences over abelian groups see  \cite{d^*}, \cite[Chapter 5]{ghk}, \cite[Chapter 10]{grynkiewicz}.
\end{remark}

The following definition was introduced in \cite{schefler_c_n^r}: 

\begin{define}\label{groupatomdef}
An element $\mathsf{m}\in\mathcal{B}(g_1,...,g_k)$ is called a \emph{group atom in 
$\mathcal{B}(g_1,...,g_k)$}  
if $\mathsf{m}$ can not be written as an \emph{integral} linear combination of elements 
of $\mathcal{B}(g_1,...,g_k)$ that have length strictly smaller than $|\mathsf{m}|$. 
\end{define}
\begin{remark}
(1) A group atom in $\mathcal{B}(g_1,...,g_k)$ is obviously an atom in 
$\mathcal{B}(g_1,...,g_k)$, but the converse is not true 
(see for example \cite[Example 4.4]{schefler_c_n^r}).

(2) It may happen that an element $\mathsf{m_0}=[m_1,...,m_k]$ is a group atom in some monoid $\mathcal{B}(g_1,...,g_k)$, however the element $\mathsf{m}=[0,m_1,...,m_k]$ is not a group atom in the larger monoid $\mathcal{B}(g_0,g_1,...,g_k)$ (see Example \ref{pelda} of the present paper).
\end{remark}

\begin{ex}\label{pelda}
Take the group $G=C_{12}\oplus C_4$, and consider the element $[8,4]\in\mathcal{B}((1,1),(1,2))$. For an element $\mathsf{m}=[m_1,m_2]\in\mathcal{B}((1,1),(1,2))$, we have $m_1+2m_2\equiv 0 \md 4$. So $m_1\equiv 0 \md 2$, hence by the condition $m_1+m_2\equiv 0 \md 12$, we get that $m_2\equiv 0 \md 2$ yielding that $m_1\equiv m_2\equiv 0 \md 4$. Hence the atoms in the monoid $\mathcal{B}((1,1),(1,2))$ are exactly: $[0,12],[4,8],[8,4],[12,0]$. This shows that $[8,4]$ is a group atom in the monoid $\mathcal{B}((1,1),(1,2))$, since it can not be written as a linear combination of elements of lower length, as such elements do not exist. However the equality $[0,8,4]=2[2,4,4]-[4,0,4]$ shows that $[0,8,4]$ is not a group atom in the monoid $\mathcal{B}((8,0),(1,1),(1,2))$. 
\end{ex}

The precise relation between the separating Noether number of an abelian group and the theory of zero-sum sequences  
was established in \cite{domokos_abelian} (see \cite{domokos_BLMS} 
for the case of not necessarily finite diagonalizable groups): 

\begin{lemma}\cite[Corollary 2.6.]{domokos_abelian}\label{md2}
The number $\beta_{sep}(G)$ is the maximal length of a group atom in $\mathcal{B}(g_1,...,g_k)$, where $\{g_1,\dots,g_k\}$ ranges over all subsets of size $k\leq \rank(G)+1$ of the abelian group $G$.
\end{lemma}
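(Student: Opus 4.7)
The plan is to translate the definition of $\beta_{sep}(G)$ into the combinatorics of weight-space decompositions. Since $G$ is finite abelian and we work over $\mathbb{C}$, every finite-dimensional $G$-module $V$ decomposes as a direct sum of one-dimensional weight spaces; I would choose dual coordinates $x_1,\dots,x_n$ on which $G$ acts by characters $\chi_1,\dots,\chi_n\in\hat G$, and identify each $\chi_i$ with an element $g_i\in G$ via a fixed isomorphism $\hat G\cong G$. Then $\mathbb{C}[V]^G$ is spanned by the monomials $x^{\mathsf{m}}:=x_1^{m_1}\cdots x_n^{m_n}$ with $\mathsf{m}=[m_1,\dots,m_n]\in\mathcal{B}(g_1,\dots,g_n)$, so that separation of $v,w\in V$ by invariants of degree $\le d$ is equivalent to separation by these invariant monomials of length $\le d$.

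For the upper bound on $\beta_{sep}(G)$, given $v,w\in V$ with $Gv\neq Gw$, I would take a minimum-length $\mathsf{m}\in\mathcal{B}(g_1,\dots,g_n)$ with $x^{\mathsf{m}}(v)\neq x^{\mathsf{m}}(w)$ and argue that $\mathsf{m}$ must be a group atom in a suitable restricted monoid. Indeed, if $\mathsf{m}=\sum_j a_j\mathsf{m}_j$ holds in $\mathcal{G}$ with $a_j\in\mathbb{Z}$ and $|\mathsf{m}_j|<|\mathsf{m}|$, then the Laurent identity $x^{\mathsf{m}}=\prod_j(x^{\mathsf{m}_j})^{a_j}$ would force $x^{\mathsf{m}}(v)=x^{\mathsf{m}}(w)$, contradicting the choice of $\mathsf{m}$. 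To cut the set of distinct weights appearing in $\supp(\mathsf{m})$ down to size at most $\rank(G)+1$, I would use a Carath\'eodory-type argument on the lattice of relations: in a rank-$r$ abelian group any minimal positive relation on more than $r+1$ elements decomposes nontrivially, and the minimality of $|\mathsf{m}|$ excludes such a decomposition.

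For the reverse inequality, starting from a group atom $\mathsf{m}\in\mathcal{B}(g_1,\dots,g_k)$ with $k\le\rank(G)+1$, I would take $V=\mathbb{C}^k$ equipped with the diagonal action of weights $g_1,\dots,g_k$ and construct $v,w\in V$ which agree on every invariant monomial of length $<|\mathsf{m}|$ but disagree on $x^{\mathsf{m}}$. A natural choice is $v=(1,\dots,1)$ and $w=(\zeta_1,\dots,\zeta_k)$ with $\zeta_i$ roots of unity satisfying $\prod_i\zeta_i^{m'_i}=1$ for every $\mathsf{m}'\in\mathcal{B}(g_1,\dots,g_k)$ of length $<|\mathsf{m}|$ but $\prod_i\zeta_i^{m_i}\neq 1$; the existence of such a tuple is exactly the group-atom property of $\mathsf{m}$, read via Pontryagin duality for finite abelian groups.

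The principal obstacle I foresee is the zero-pattern subtlety in the upper bound: the Laurent identity becomes a genuine polynomial identity only when the monomials with negative exponents do not vanish on $v$ or $w$. One handles this by restricting attention to the common support of $v$ and $w$ (noting that a vanishing coordinate $x_i$ is already separated by the low-degree invariant $x_i^{\ord(g_i)}$) and combining with a perturbation argument; this is intertwined with the Carath\'eodory reduction of the support, so the two steps are best executed simultaneously.
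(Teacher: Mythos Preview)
The paper does not supply a proof of this lemma; it is quoted directly from \cite[Corollary~2.6]{domokos_abelian}. So there is nothing in the present paper to compare your attempt against, and what follows evaluates your sketch on its own terms.

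Your overall architecture --- diagonalise, reduce separation to invariant monomials, use a Laurent identity plus minimality for the upper bound, and a duality construction for the lower bound --- is the right one, and the lower bound is essentially correct. (A minor slip: the $\zeta_i$ need not be roots of unity if the subgroup $L'\subset\mathbb Z^k$ spanned by the shorter blocks is not of full rank, but arbitrary $\zeta_i\in\mathbb C^\times$ work just as well, so this is harmless.)

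The genuine gap is the Carath\'eodory step. Your assertion that ``in a rank-$r$ abelian group any minimal positive relation on more than $r+1$ elements decomposes nontrivially'' is false, and hence so is the conclusion that a minimal separating monomial for a fixed pair $v,w$ has at most $r+1$ distinct weights in its support. Take $G=C_6$ (so $r=1$), weights $g_1=1,\ g_2=2,\ g_3=3$, and $\mathsf m=[1,1,1]$. The only nonzero element of $\mathcal B(1,2,3)$ of length ${<}3$ is $[0,0,2]$, so $\mathsf m$ is a group atom with $|\supp(\mathsf m)|=3>r+1=2$. Correspondingly, for $v=(1,1,1)$ and $w=(e^{2\pi i/6},1,1)$ the unique minimal separating invariant monomial is $x_1x_2x_3$, of full support; no monomial of length $\le 3$ supported on two coordinates separates this pair.

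The correct reduction to $\le r+1$ weights, as carried out in \cite{domokos_abelian}, does not act on the relation $\mathsf m$ at all: it is a Helly-type statement about \emph{cosets}. Writing $S_i=\{g\in G:\chi_i(g)v_i=w_i\}$ (either empty or a coset of $\ker\chi_i$), one has $Gv\neq Gw$ iff $\bigcap_iS_i=\emptyset$, and in an abelian group of rank $r$ any family of cosets with empty intersection already has a subfamily of size at most $r+1$ with empty intersection. That subfamily singles out $\le r+1$ coordinates on which the projections of $v,w$ lie in distinct orbits, and \emph{then} your Laurent/minimality argument runs inside that coordinate subspace to produce a group atom in a block monoid on $\le r+1$ elements. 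In the $C_6$ example, $S_1\cap S_2=\emptyset$, and the minimal separator for the projected pair in $\mathcal B(1,2)$ is $[2,2]$ --- longer than $[1,1,1]$, but that is fine: the upper bound only requires it to be bounded by the maximal group-atom length over $2$-element subsets, which is $6$.
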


In this paper the following convention will be used: $G$ will stand for the abelian group $G=C_{n_1}\oplus C_{n_2}\oplus ... \oplus C_{n_r}$ where $2\le n_r\mid...\mid n_2\mid n_1$, and $r=2s$ or $2s-1$, depending on its parity, $g_1,\dots,g_k$ are fixed distinct elements of $G$, and $\mathcal{B}(g_1,\dots,g_k)$ the corresponding block monoid. Note that here $r$ is the \emph{rank} (i.e. the minimal number of generators), and $n_1=\exp(G)$ is the \emph{exponent} (i.e. the least common multiple of the orders  of the elements) of the group $G$.

Using Lemma \ref{md2}, the following general upper bound was given for $\beta_{sep}(G)$:

\begin{lemma}\cite[Theorem 3.10]{domokos_abelian}\label{md1}
$\beta_{sep}(G) \leq \sum_{i=1}^r(n_i-1)+1$ for all abelian groups $G$, with equality  if and only if $G$ is cyclic or $2=n_{s+1}=$ $\dots$ $= n_r$ where $r=2s-1$ or $r=2s$.
\end{lemma}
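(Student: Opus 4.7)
My plan is to invoke Lemma \ref{md2} and work with a group atom $\mathsf{m}\in\mathcal{B}(g_1,\dots,g_k)$ of maximal length, where $k\le r+1$. As a first reduction, one may assume $g_1,\dots,g_k$ generate $G$: otherwise the subgroup $\langle g_1,\dots,g_k\rangle\le G$ has invariant factors $m_1,\dots,m_{r'}$ with $r'\le r$ and $m_i\mid n_i$, so $\sum(m_i-1)\le\sum(n_i-1)$ and the bound for the subgroup suffices.

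For the upper bound I would adopt a lattice-theoretic viewpoint. The relation lattice $\mathcal{G}(g_1,\dots,g_k)\subseteq\mathbb{Z}^k$ has full rank, and under the above reduction Smith normal form gives $\mathbb{Z}^k/\mathcal{G}(g_1,\dots,g_k)\cong G$, so the non-trivial invariant factors of $\mathcal{G}$ are $n_r\mid n_{r-1}\mid\dots\mid n_1$ together with $k-r\in\{0,1\}$ trivial factors. The target estimate reduces to the claim that $\mathcal{G}(g_1,\dots,g_k)$ is generated as a $\mathbb{Z}$-module by elements of $\mathcal{B}(g_1,\dots,g_k)$ of length at most $1+\sum_{i=1}^r(n_i-1)$; granting this, any $\mathsf{m}\in\mathcal{B}$ of strictly greater length lies in the $\mathbb{Z}$-span of strictly shorter monoid elements and hence fails to be a group atom. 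A natural candidate generating set consists of the diagonal atoms $\ord(g_i)\mathsf{e_i}$ of length at most $n_1$, together with non-negative lifts of the (at most one, when $k=r+1$) extra Smith-basis vectors needed to recover the full lattice. The restriction $k\le r+1$ is exactly what makes it possible to arrange these extras inside $\mathcal{B}$ and within the length bound.

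For the equality statement I would exhibit explicit extremal group atoms. If $G=C_n$ is cyclic and $g_1$ is a generator, then $n\mathsf{e_1}\in\mathcal{B}(g_1)$ is trivially a group atom of length $n=1+(n-1)$, because no shorter non-zero element of $\mathcal{B}(g_1)$ exists. If $2=n_{s+1}=\dots=n_r$, one can combine a maximal-length atom in the ``upper half'' $C_{n_1}\oplus\dots\oplus C_{n_s}$ with single-unit contributions $\mathsf{e_i}$ from the $C_2$-summands to produce a group atom attaining the bound $1+\sum(n_i-1)$. Conversely, to exclude equality outside these families one argues that any invariant factor $\ge 3$ in the lower half of the decomposition always furnishes a non-trivial integral splitting of a would-be extremal atom into strictly shorter monoid elements.

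The main obstacle is the generation claim for $\mathcal{G}(g_1,\dots,g_k)$ by short \emph{non-negative} lattice vectors: producing short generators of $\mathcal{G}$ as a lattice is routine, but insisting that each generator lies in $\mathcal{B}$ (i.e.\ has non-negative entries) is a genuine constraint, and quantitatively matching the length bound $1+\sum(n_i-1)$ is where the technical weight of the argument concentrates. The sharpness half is also delicate, since ruling out equality requires a careful invariant-factor case analysis, which is what forces the precise description $2=n_{s+1}=\dots=n_r$ in the statement.
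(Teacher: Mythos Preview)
The paper does not contain a proof of this lemma: it is quoted verbatim as \cite[Theorem 3.10]{domokos_abelian} and used as a black box. So there is no in-paper argument to compare against.

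Regarding your proposal itself: the overall architecture is reasonable, but as you explicitly acknowledge, the two load-bearing steps are left unproved. First, the claim that $\mathcal{G}(g_1,\dots,g_k)$ is generated over $\mathbb{Z}$ by elements of $\mathcal{B}(g_1,\dots,g_k)$ of length at most $1+\sum_{i=1}^r(n_i-1)$ is asserted but not established. Your proposed candidate generating set --- the diagonal atoms $\ord(g_i)\mathsf{e_i}$ together with ``non-negative lifts'' of additional Smith-basis vectors --- is not shown to actually work: the diagonal atoms alone span only the proper sublattice $\bigoplus_i \ord(g_i)\mathbb{Z}$, whose index in $\mathcal{G}(g_1,\dots,g_k)$ is $\prod_i\ord(g_i)/|G|$ and need not be cyclic even when $k=r+1$, so one ``extra'' vector is generally not enough; and there is no argument that any chosen non-negative lifts satisfy the required length bound. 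Second, for the equality characterisation you only gesture at both directions: the construction of an extremal group atom in the case $n_{s+1}=\dots=n_r=2$ is not written down (combining a maximal atom in the upper half with contributions from the $C_2$ summands does not obviously preserve the group-atom property), and the exclusion of equality when some lower invariant factor exceeds $2$ is deferred to an unspecified ``careful case analysis''. In short, what you have is a plan rather than a proof; the substantive work in \cite{domokos_abelian} lies precisely at the points you flag as obstacles.
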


We shall use the following results from \cite{schefler_c_n^r}:

\begin{lemma}\cite[Lemma 3.1.]{schefler_c_n^r}\label{m*geqm}
Let $\mathsf{m}$ be a group atom in the monoid $\mathcal{B}(g_1,\dots,g_k)$ such that $|\mathsf{m}|> \max\{\ord(g_i)\mid i=1,\dots,k\}$. Then we have 
\begin{equation}\label{mot}
    2|\mathsf{m}|\le \sum_{i=1}^k\ord(g_i)
\end{equation}
\end{lemma}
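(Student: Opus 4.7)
My plan is to exploit the ``trivial atoms'' $\ord(g_i)\mathsf{e_i} \in \mathcal{B}(g_1,\dots,g_k)$ by forming a complementary element of the monoid. Specifically, I would set $\mathsf{m}' := \sum_{i=1}^k \ord(g_i)\mathsf{e_i} - \mathsf{m}$, the integer vector whose $i$-th coordinate is $\ord(g_i) - m_i$, and which automatically evaluates to $0$ in $G$.

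Before using $\mathsf{m}'$, I would first verify that its entries are non-negative, so that it actually lies in $\mathcal{B}(g_1,\dots,g_k)$. If instead $m_i \ge \ord(g_i)$ for some $i$, the decomposition $\mathsf{m} = \ord(g_i)\mathsf{e_i} + (\mathsf{m} - \ord(g_i)\mathsf{e_i})$ would split $\mathsf{m}$ as a sum of two nonzero monoid elements (nonzeroness of the second summand uses $|\mathsf{m}| > \ord(g_i)$), contradicting that any group atom is in particular an atom. So $\mathsf{m}'$ is in the monoid, with length $\sum_i \ord(g_i) - |\mathsf{m}|$.

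The decisive step is then to read the identity $\mathsf{m} = \sum_{i=1}^k \ord(g_i)\mathsf{e_i} - \mathsf{m}'$ as an integer linear combination of elements of $\mathcal{B}(g_1,\dots,g_k)$. Since by hypothesis $\ord(g_i) < |\mathsf{m}|$ for every $i$, the group-atom property of $\mathsf{m}$ forbids every participating monoid element from being strictly shorter than $\mathsf{m}$; hence $|\mathsf{m}'| \ge |\mathsf{m}|$, which rearranges to $2|\mathsf{m}| \le \sum_i \ord(g_i)$.

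There is no real obstacle here beyond spotting the complementary element: the argument cleanly separates the two atomicity hypotheses, with ordinary atomicity supplying the bound $m_i < \ord(g_i)$ (which ensures $\mathsf{m}'$ is in the monoid) and the strictly stronger group-atomicity supplying the length inequality $|\mathsf{m}'| \ge |\mathsf{m}|$. The numerical hypothesis $|\mathsf{m}| > \max_i \ord(g_i)$ is used in exactly two places, and is exactly the input needed to make both invocations go through.
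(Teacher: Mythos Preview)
Your argument is correct and is exactly the standard one; the paper does not supply its own proof of this lemma but cites it from \cite{schefler_c_n^r}, where essentially the same complementary-element trick is used. Note, incidentally, that the paper's Lemma~\ref{ord} is a refinement of precisely this idea: there one subtracts not only $\mathsf{m}$ but also some additional short atoms $\mathsf{v}_\kn^t$ from $\sum_i\ord(g_i)\mathsf{e_i}$ before invoking the group-atom property, yielding a sharper inequality.
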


\begin{lemma}\cite[Lemma 4.3.]{schefler_c_n^r}\label{kn+gcd}
Assume that $r=2s$ and $n_{s+1}=\dots=n_1$. Let $\mathsf{m}=[m_1,\dots,m_{2s+1}]\in\mathcal{B}(g_1,\dots,g_{2s+1})$, for elements $g_1,\dots,g_{2s+1}\in G$ satisfying the following conditions:
\begin{itemize}
    \item[(i)] $\ord(g_i)=n_1$ for $i\in\{1,\dots,2s\}$ 
    \item[(ii)] $|\mathsf{m}|>sn_1+\frac{\ord(g_{2s+1})}{p}$, where $p$ is the minimal prime divisor of $\ord(g_{2s+1})$.    
\end{itemize}
Then $\mathsf{m}$ is not a group atom in $\mathcal{B}(g_1,\dots,g_{2s+1})$.
\end{lemma}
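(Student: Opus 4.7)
The plan is to argue by contradiction: set $n := \ord(g_{2s+1})$ and suppose $\mathsf{m}$ is a group atom of length $|\mathsf{m}| > sn_1 + n/p$. The first step is the standard reduction that $m_i < n_1$ for $i \le 2s$ and $m_{2s+1} < n$: if $m_i \ge n_1$ for some $i \le 2s$, the decomposition $\mathsf{m} = n_1 \mathsf{e_i} + (\mathsf{m} - n_1 \mathsf{e_i})$ writes $\mathsf{m}$ as a sum of two nonzero elements of $\mathcal{B}(g_1, \dots, g_{2s+1})$, both of length strictly less than $|\mathsf{m}|$ (using $|\mathsf{m}| > sn_1 \ge n_1$), contradicting even the ordinary atom property; the same argument handles $g_{2s+1}$.

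Next I would invoke Lemma~\ref{m*geqm}: since $|\mathsf{m}| > sn_1 + n/p \ge n_1 = \max_i \ord(g_i)$, that lemma yields
\[
2|\mathsf{m}| \le \sum_{i=1}^{2s+1} \ord(g_i) = 2sn_1 + n,
\]
so $|\mathsf{m}| \le sn_1 + n/2$. Combined with the hypothesis $|\mathsf{m}| > sn_1 + n/p$, this forces $n/p < n/2$ and hence $p \ge 3$; in particular $n$ is odd. Thus the case $p=2$ is disposed of immediately.

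For the residual case $p \ge 3$, Lemma~\ref{m*geqm} is no longer tight and the argument must exploit the arithmetic structure of $n$ more delicately. My plan is to build an explicit integer decomposition of $\mathsf{m}$ using the syzygy lattice $\mathcal{G}(g_1, \dots, g_{2s+1}) \subset \mathbb{Z}^{2s+1}$: since the $2s+1$ elements $g_1, \dots, g_{2s+1}$ lie in a group of rank $2s$, there is a nontrivial $\mathbb{Z}$-relation among them beyond the diagonal order relations $n_1 \mathsf{e_i}$ ($i \le 2s$) and $n \mathsf{e_{2s+1}}$. Choosing such a relation appropriately and applying a pigeonhole argument that partitions the contribution of $g_{2s+1}$ into $p$ residue classes modulo $p$, one obtains $\mathsf{m} = \sum_j c_j \mathsf{b_j}$ with $c_j \in \mathbb{Z}$, $\mathsf{b_j} \in \mathcal{B}(g_1, \dots, g_{2s+1})$, and $|\mathsf{b_j}| < |\mathsf{m}|$. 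The factor $n/p$ (rather than $n/2$) in the hypothesis arises precisely from this $p$-fold partition: one can afford to set aside $n/p$ copies of $g_{2s+1}$ in a single short block before the remaining data forces collisions.

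The principal obstacle is the bookkeeping in this last step: one must verify that the pigeonhole splitting produces summands whose lengths all fall strictly below $|\mathsf{m}|$, drawing on exactly the slack $|\mathsf{m}| - (sn_1 + n/p) \ge 1$ granted by the hypothesis. Because the atoms of $\mathcal{B}(g_1, \dots, g_{2s})$ need not have uniformly small length, the proof must exploit the specific structure of the chosen short relation together with the residue-class decomposition in concert. I would expect the detailed verification to proceed by a case analysis on whether the critical short relation involves $g_{2s+1}$ with coefficient coprime to $p$ or not, with the coprime case using a direct substitution and the non-coprime case reducing to a smaller instance after passing to the index-$p$ subgroup $\langle p\, g_{2s+1}\rangle$.
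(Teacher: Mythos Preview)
The paper does not prove this lemma: it is quoted verbatim from \cite[Lemma~4.3]{schefler_c_n^r} and used as a black box in the proof of Theorem~\ref{ujt}. There is therefore no proof in the present paper to compare your proposal against.

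Evaluating your proposal on its own merits: the reduction to $m_i<\ord(g_i)$ and the application of Lemma~\ref{m*geqm} to obtain $|\mathsf{m}|\le sn_1+n/2$, hence disposing of the case $p=2$, are correct and clean. The genuine gap is everything after that. For $p\ge 3$ you give only a programmatic outline---``choose a nontrivial relation, apply a pigeonhole argument partitioning into $p$ residue classes, then do a case analysis on whether the relation coefficient at $g_{2s+1}$ is coprime to $p$''---but none of these steps is actually executed. In particular you do not specify which short relation to use, how the $p$-fold partition is organised, or why the resulting summands all have length strictly below $|\mathsf{m}|$; you yourself flag this bookkeeping as ``the principal obstacle.'' As it stands the proposal is a plausible heuristic rather than a proof: the entire substantive case $p\ge 3$ remains to be done, and the sketch is too vague to assess whether the intended construction can be made to work. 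If you want to complete this, you will need to consult the original argument in \cite{schefler_c_n^r} or carry out the construction in full detail.
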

\begin{lemma}\cite[Lemma 5.5.]{schefler_c_n^r}\label{bsgeqsumni}
Assume that $r=2s$ and let $p$ be a prime divisor of $n_r$. Then $\beta_{sep}(G)\geq n_1+\dots+n_s+\frac{n_{s+1}}{p}$.
\end{lemma}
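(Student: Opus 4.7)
The plan is to apply Lemma~\ref{md2}, which reduces the problem to exhibiting a group atom of length $n_1+\dots+n_s+n_{s+1}/p$ in some block monoid $\mathcal{B}(g_1,\dots,g_k)$ with $k\le r+1=2s+1$. Accordingly, I would construct explicit elements $g_1,\dots,g_k\in G$ together with a candidate zero-sum sequence $\mathsf{m}\in\mathcal{B}(g_1,\dots,g_k)$ of length $n_1+\dots+n_s+n_{s+1}/p$, and then verify that $\mathsf{m}$ cannot be written as an integer linear combination of elements of $\mathcal{B}$ of strictly smaller length.

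Write $f_1,\dots,f_{2s}$ for fixed generators of the cyclic summands $C_{n_1},\dots,C_{n_{2s}}$ of $G$. In the base case $s=1$, the clean construction is $k=3$ with $g_1=f_1$, $g_2=f_2$, $g_3=f_1+(n_2/p)f_2$ (of orders $n_1,n_2,n_1$ respectively), and $\mathsf{m}=[n_1-(p-1),\,n_2/p,\,p-1]$; the congruences $m_1+m_3\equiv 0\pmod{n_1}$ and $m_2+(n_2/p)m_3\equiv 0\pmod{n_2}$ are immediate, and $|\mathsf{m}|=n_1+n_2/p$. The key observation is that every $\mathsf{v}\in\mathcal{B}(g_1,g_2,g_3)$ with $|\mathsf{v}|<n_1+n_2/p$ satisfies $v_2\in n_2\mathbb{Z}$: either $v_1+v_3=0$, forcing $v_1=v_3=0$ and hence $v_2\equiv 0\pmod{n_2}$, or $v_1+v_3=n_1$, and then the length bound $v_2<n_2/p$ combined with the fact that $v_2\in(n_2/p)\mathbb{Z}$ (a consequence of the second congruence) forces $v_2=0$. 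Since $m_2=n_2/p\notin n_2\mathbb{Z}$, no integer combination of such $\mathsf{v}$'s can equal $\mathsf{m}$, so $\mathsf{m}$ is a group atom.

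For general $s$ I would take $k=2s+1$ with $g_i=f_i$ for $i\le s+1$ and $g_{s+1+j}=f_j+(n_{s+1}/p)f_{s+1}$ for $j=1,\dots,s$ (each of order $n_j$, using $p\mid n_{s+1}\mid n_j$), and $\mathsf{m}=[n_1-a_1,\dots,n_s-a_s,\,n_{s+1}/p,\,a_1,\dots,a_s]$, with positive integers $a_j<n_j$ satisfying $\sum_j a_j\equiv -1\pmod p$ (which is possible since $n_j\ge p$). One checks directly that $\mathsf{m}\in\mathcal{B}(g_1,\dots,g_{2s+1})$ and $|\mathsf{m}|=n_1+\dots+n_s+n_{s+1}/p$.

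The principal obstacle is verifying that $\mathsf{m}$ is a group atom when $s\ge 2$, because the sublattice of $\mathcal{G}(g_1,\dots,g_{2s+1})$ generated by elements of $\mathcal{B}$ of length below $|\mathsf{m}|$ is considerably richer than in the rank-two case. Additional modular conditions on the $a_j$'s must be imposed to preclude splittings of $\mathsf{m}$ along a single coupling pair $(j,s+1+j)$, and a linear functional on $\mathcal{G}(g_1,\dots,g_{2s+1})$ derived from the key congruence $v_{s+1}\equiv -(n_{s+1}/p)\sum_j v_{s+1+j}\pmod{n_{s+1}}$ must be tailored to vanish on every short element of $\mathcal{B}$ while being nonzero on $\mathsf{m}$, providing the needed contradiction.
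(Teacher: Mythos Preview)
The paper does not prove this lemma at all: it is imported verbatim from \cite[Lemma~5.5]{schefler_c_n^r}, so there is no in-paper argument to compare against. What matters here is only whether your argument stands on its own and covers what the present paper actually uses.

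For $s=1$ your construction is correct and the verification is complete. The zero-sum and length checks are fine, and the key observation---that every $\mathsf{v}\in\mathcal{B}(g_1,g_2,g_3)$ with $|\mathsf{v}|<n_1+n_2/p$ has $v_2\in n_2\mathbb{Z}$---is properly justified by the dichotomy $v_1+v_3\in\{0,n_1\}$ (the case $v_1+v_3\ge 2n_1$ being excluded by $2n_1>n_1+n_2/p$). Since $m_2=n_2/p\notin n_2\mathbb{Z}$, the second coordinate gives the desired obstruction. This is exactly the case the paper needs: in the proof of Theorem~\ref{mainthm} the lemma is invoked only for $G=C_{\ell n}\oplus C_n$, i.e.\ $r=2$, $s=1$.

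For general $s$, however, your write-up is not a proof but a plan. You correctly identify the difficulty---the sublattice generated by short elements of $\mathcal{B}$ is much larger once several coupling pairs $(j,s{+}1{+}j)$ interact---and you gesture at a remedy (extra congruence conditions on the $a_j$, a suitable linear functional vanishing on short elements), but none of this is carried out. In particular, the single congruence $\sum_j a_j\equiv -1\pmod p$ does not by itself prevent $\mathsf{m}$ from decomposing via shorter elements that mix several pairs, and no candidate functional is exhibited, let alone shown to vanish on all short elements. As written, the general-$s$ portion has a genuine gap. If you only need the rank-two result of this paper, restrict the claim to $s=1$ and your argument is sufficient; otherwise the verification for $s\ge 2$ must be supplied (or the citation to \cite{schefler_c_n^r} retained).
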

\begin{theorem}\cite[Theorem 1.2.]{schefler_c_n^r}\label{ptlns}
 For positive integers $n\ge 2$ and $r$ denote by $C_n^r$ the direct sum $C_n\oplus\cdots\oplus C_n$ of $r$ copies of the cyclic group $C_n$ of order $n$, and let $p$ be the minimal prime divisor of $n$. 
 Then we have \textbf{}
 \[\beta_{sep}(C_n^r)=\begin{cases}
        ns,&\mbox{ if }r=2s-1 \mbox{ is odd} \\
        ns+\frac{n}{p},&\mbox{ if }r=2s \mbox{ is even}.\\
    \end{cases}\]
 \end{theorem}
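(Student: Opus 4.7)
The plan is to use Lemma \ref{md2} to reduce to bounding the length of group atoms $\mathsf{m}$ in block monoids $\mathcal{B}(g_1,\dots,g_k)$ over $C_n^r$ with $k\leq r+1$. Each $\ord(g_i)$ divides $n=\exp(C_n^r)$, so every prime divisor of $\ord(g_i)$ is at least $p$.

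\emph{Upper bound.} In the odd case $r=2s-1$, Lemma \ref{m*geqm} suffices: either $|\mathsf{m}|\leq\max_i\ord(g_i)\leq n$, or $2|\mathsf{m}|\leq\sum_{i=1}^k\ord(g_i)\leq kn\leq 2sn$, and in either subcase $|\mathsf{m}|\leq sn$. In the even case $r=2s$ the only nontrivial subcase is $k=2s+1$. Reorder the $g_i$ so that $\ord(g_1)\geq\cdots\geq\ord(g_{2s+1})$, and split on whether $\ord(g_{2s})=n$. If yes, the hypotheses of Lemma \ref{kn+gcd} are met, whose contrapositive yields $|\mathsf{m}|\leq sn+\ord(g_{2s+1})/p'$ with $p'$ the minimum prime divisor of $\ord(g_{2s+1})$; combining $\ord(g_{2s+1})\leq n$ with $p'\geq p$ gives $|\mathsf{m}|\leq sn+n/p$. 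If instead $\ord(g_{2s})<n$, then both $\ord(g_{2s})$ and $\ord(g_{2s+1})$ are at most $n/p$ (the largest proper divisor of $n$), so Lemma \ref{m*geqm} yields $2|\mathsf{m}|\leq (2s-1)n+2(n/p)$, i.e.\ $|\mathsf{m}|<sn+n/p$.

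\emph{Lower bound.} For even $r=2s$, Lemma \ref{bsgeqsumni} yields $\beta_{sep}(C_n^{2s})\geq sn+n/p$ directly. For odd $r=2s-1$ I construct an explicit group atom of length $sn$ in $\mathcal{B}(g_1,\dots,g_{2s})$. For $s=1$, $[n]\in\mathcal{B}(e_1)$ works. For $s\geq 2$, denoting the standard basis of $C_n^{2s-1}$ by $e_1,\dots,e_{2s-1}$, take
\[g_1=e_1,\ g_2=e_2,\ g_i=e_{i-2}+e_i\ (3\leq i\leq 2s-2),\]
together with $g_{2s-1}=e_{2s-3}+e_{2s-1}$ and $g_{2s}=e_{2s-2}+e_{2s-1}$. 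The resulting $2s-1$ block congruences form a tree-structured system on $\{m_1,\dots,m_{2s}\}$, and back-substitution gives $m_i\equiv\varepsilon_i m_{2s}\pmod n$ for signs $\varepsilon_i\in\{\pm 1\}$ determined by the bipartition of the tree, whose two color classes each have exactly $s$ vertices. Hence the minimal non-negative block with $m_{2s}=k\in\{1,\dots,n-1\}$ has length $sk+s(n-k)=sn$. Since every block of length strictly less than $sn$ lies in the lattice $n\mathbb{Z}^{2s}$ spanned by $\{[ne_i]\}$ while $\mathsf{m}\bmod n\neq 0$, $\mathsf{m}$ is not an integer combination of shorter blocks and is therefore a group atom.

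The principal technical difficulty lies in the even-case upper bound at $k=2s+1$, where the direct Lemma \ref{m*geqm} bound of $sn+n/2$ is sharp only for $p=2$. Recovering the true bound $sn+n/p$ requires the structural Lemma \ref{kn+gcd}, whose hypothesis that at most one generator has order less than $n$ must be verified via reordering, with the complementary case of two or more small-order generators handled separately. A secondary subtlety on the odd-case lower bound is confirming that the tree-structured block equations admit (up to scalar) a unique non-trivial solution of weight exactly $sn$, which amounts precisely to the bipartition-size count that each color class of the tree has exactly $s$ vertices.
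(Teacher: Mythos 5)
This statement is imported: the paper cites it as \cite[Theorem 1.2]{schefler_c_n^r} and gives no proof of its own, so there is no in-paper argument to compare against line by line. That said, your derivation is correct and is built from exactly the auxiliary results the paper does quote from that source, so it is close in spirit to the original. The upper bound is handled properly: Lemma \ref{md2} reduces to $k\le r+1$; Lemma \ref{m*geqm} disposes of the odd case and of $k\le 2s$ in the even case; and for $k=2s+1$ your case split on $\ord(g_{2s})$ is the right move, with Lemma \ref{kn+gcd} covering the case of $2s$ generators of full order (your bound $\ord(g_{2s+1})/p'\le n/p$ holds because $\ord(g_{2s+1})/p'$ is a divisor of $n$ that is strictly smaller than $n$, hence at most $n/p$), and Lemma \ref{m*geqm} covering the case of two generators of order at most $n/p$. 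The even lower bound is Lemma \ref{bsgeqsumni} verbatim. The one piece not supplied by the quoted lemmas is the odd-case lower bound, and your explicit construction checks out: the incidence graph of your congruences is a path on the $2s$ indices ($1,3,\dots,2s-1,2s,2s-2,\dots,2$), its two colour classes have size $s$ each, so the minimal nonnegative block with $m_{2s}=k$ has length $sk+s(n-k)=sn$, and since every block of length below $sn$ lies in $n\mathbb{Z}^{2s}$ while $\mathsf{m}\not\in n\mathbb{Z}^{2s}$, the element $\mathsf{m}$ is a group atom. The only loose end is the degenerate possibility $g_{2s+1}=0$ (order $1$, no prime divisor) in the application of Lemma \ref{kn+gcd}; it is harmless, since a group atom then either has $m_{2s+1}=0$ or equals $\mathsf{e}_{2s+1}$, but it deserves a sentence.
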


\section{Preparations for the odd order case}\label{sec:prep}
\begin{lemma}\label{even}
Let $G$ be an abelian group of odd order, $g_1,\dots,g_k\in G$. Suppose that for an element $\mathsf{m}\in\mathcal{B}(g_1,\dots,g_k)$ there exists an integral linear combination $\lambda_1\mathsf{q}_1+...+\lambda_{t}\mathsf{q}_t+\varepsilon\mathsf{m=\linkomb}\in\mathcal{B}(g_1,\dots,g_k)$, $\mathsf{\linkomb}=[\linkomb_1,\dots,\linkomb_k]$ such that the  following conditions are fulfilled:
\begin{itemize}
    \item [(i)] $\lambda_j\in\mathbb{Z}$, $\mathsf{q}_j\in\mathcal{B}(g_1,\dots,g_k)$, $|\mathsf{q}_j|<|\mathsf{m}|$ for $j=1,...,t$, and $\varepsilon\in\{1,-1\}$
    \item [(ii)] $0\leq \linkomb_i=2\linkomb_i'$ is even for $i=1,\dots,k$ and $\linkomb_1+\dots+\linkomb_k<2|\mathsf{m}|$
\end{itemize}
Then $\mathsf{m}$ is not a group atom. 
\end{lemma}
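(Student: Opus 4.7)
The plan is to exploit the odd order of $G$ to ``halve'' the block $\mathsf{\linkomb}$ into a strictly shorter element of the same block monoid, and then to rearrange the given identity to express $\mathsf{m}$ as an integral combination of blocks of length less than $|\mathsf{m}|$.

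First I would set $\mathsf{\linkomb}' := [\linkomb_1',\dots,\linkomb_k']$, noting that each $\linkomb_i'$ is a non-negative integer by hypothesis (ii). Since $\mathsf{\linkomb}\in\mathcal{B}(g_1,\dots,g_k)$, we have $2\sum_{i=1}^k \linkomb_i' g_i = \sum_{i=1}^k \linkomb_i g_i = 0$ in $G$. The crucial step is that $|G|$ being odd makes $\gcd(2,|G|)=1$, so the endomorphism $x\mapsto 2x$ of $G$ is an automorphism; consequently $\sum_{i=1}^k \linkomb_i' g_i = 0$, and therefore $\mathsf{\linkomb}'\in\mathcal{B}(g_1,\dots,g_k)$. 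Then (ii) gives $|\mathsf{\linkomb}'| = \tfrac{1}{2}(\linkomb_1+\dots+\linkomb_k) < |\mathsf{m}|$. Solving the hypothesized identity for $\varepsilon\mathsf{m}$ and multiplying by $\varepsilon\in\{\pm 1\}$ yields
\[ \mathsf{m} \;=\; 2\varepsilon\,\mathsf{\linkomb}' \;-\; \varepsilon\lambda_1 \mathsf{q}_1 \;-\; \dots \;-\; \varepsilon\lambda_t \mathsf{q}_t, \]
an integral linear combination of elements $\mathsf{\linkomb}',\mathsf{q}_1,\dots,\mathsf{q}_t\in\mathcal{B}(g_1,\dots,g_k)$ of lengths strictly smaller than $|\mathsf{m}|$. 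By Definition~\ref{groupatomdef}, $\mathsf{m}$ is therefore not a group atom.

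There is no real obstacle here: the entire content of the lemma is the elementary observation that in a group of odd order the relation $2x=0$ forces $x=0$, which lets us divide an entrywise-even block by $2$ while staying inside the block monoid. Conditions (i) and (ii) are calibrated precisely so that this halving, combined with the given relation, produces the explicit shorter-length witness against $\mathsf{m}$ being a group atom. The oddness hypothesis is essential: without it one could not in general guarantee $\mathsf{\linkomb}'\in\mathcal{B}(g_1,\dots,g_k)$, and indeed the analogous statement fails for even-order groups.
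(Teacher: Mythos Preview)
Your proof is correct and follows essentially the same argument as the paper: halve the entrywise-even block $\mathsf{\linkomb}$ using that $x\mapsto 2x$ is injective on an odd-order group, observe $|\mathsf{\linkomb}'|<|\mathsf{m}|$, and rearrange to express $\mathsf{m}$ (or $\varepsilon\mathsf{m}$) as an integral combination of strictly shorter blocks. The only cosmetic difference is that you multiply through by $\varepsilon$ at the end, whereas the paper leaves $\varepsilon\mathsf{m}$ on the left; both are equivalent.
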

\begin{proof}
Since the order of $G$ is odd, the equality $\sum_{i=1}^k 2\linkomb'_ig_i=0$ implies $\sum_{i=1}^k \linkomb'_ig_i=0$, hence $\mathsf{\linkomb'}=[\linkomb'_1,\dots,\linkomb'_k]\in\mathcal{B}(g_1,\dots,g_k)$. Here $|\mathsf{\linkomb'}|=\frac{\sum_{i=1}^k\linkomb_i}{2}<|\mathsf{m}|$, so the equality 
\[\varepsilon\mathsf{m}=2\mathsf{\linkomb'}-(\lambda_1\mathsf{q}_1+...+\lambda_{t}\mathsf{q}_t)\]
shows that $\mathsf{m}$ can be written as an integral linear combination of elements of length strictly lower than $|\mathsf{m}|$. Hence it is not a group atom.
\end{proof}

\begin{proposition}\label{knodd}
Let $G=C_{n_1}\oplus C_{n_2}\oplus ...\oplus C_{n_r}$ be an odd order abelian group of rank at least two, such that $1<\tort:=\frac{n_1}{n_2}$. Let $g_1,g_2,g_3\in G$ with $\ord(g_i)=\tort n$ for $i=1,2,3$, where $n=n_2$. Suppose that $\mathsf{m}=[m_1,m_2,m_3]\in\mathcal{B}(g_1,g_2,g_3)$ is an element such that $\tort n<|\mathsf{m}|$, and there is exactly one even number among $m_1,m_2$ and $m_3$. Then $\mathsf{m}$ is not a group atom.

\end{proposition}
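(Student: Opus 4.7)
The plan is to argue by contradiction: assuming $\mathsf{m}$ is a group atom, I would construct, via Lemma~\ref{even}, a block $\mathsf{b}$ with all even non-negative entries and length less than $2|\mathsf{m}|$ of the form $\varepsilon\mathsf{m}+\sum_j \lambda_j \mathsf{q}_j$ where each $\mathsf{q}_j\in\mathcal{B}(g_1,g_2,g_3)$ has length less than $|\mathsf{m}|$. First I would apply Lemma~\ref{m*geqm} to conclude $2|\mathsf{m}|\le 3\tort n$ and hence $|\mathsf{m}|\le\lfloor 3\tort n/2\rfloor$. By the symmetry of the hypothesis in $g_1,g_2,g_3$ (all have order $\tort n$), I would relabel so that $m_3$ is the unique even entry and $m_1,m_2$ are both odd. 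Since $|G|$ is odd, $\tort n$ is odd, so each $\tort n\mathsf{e}_i\in\mathcal{B}$ has its single non-zero entry $\tort n$ odd; parity considerations then force $\alpha_i\equiv m_i\pmod 2$ in any combination $\varepsilon\mathsf{m}+\sum\alpha_i\tort n\mathsf{e}_i$ with all even entries.

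Next I would case-split on the sizes of $m_1,m_2,m_3$ relative to $\tort n$. If $\max(m_1,m_2)\ge\tort n$, say $m_1\ge\tort n$, then $\mathsf{b}:=\mathsf{m}-\tort n\mathsf{e}_1+\tort n\mathsf{e}_2$ has non-negative even entries and length $|\mathsf{m}|$, so Lemma~\ref{even} applies with $\varepsilon=1$, $\lambda_1=-1$, $\lambda_2=1$. If $m_1,m_2<\tort n$ and $m_3=0$, then $\mathsf{b}:=\tort n\mathsf{e}_1+\tort n\mathsf{e}_2-\mathsf{m}$ has non-negative even entries and length $2\tort n-|\mathsf{m}|<2|\mathsf{m}|$ (using $|\mathsf{m}|>\tort n>2\tort n/3$), so Lemma~\ref{even} applies again. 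If $m_1,m_2<\tort n$ and $m_3>0$, I would try $\mathsf{b}:=\tort n\mathsf{e}_1+\tort n\mathsf{e}_2+2\tort n\mathsf{e}_3-\mathsf{m}$, which has non-negative even entries (using $m_3\le|\mathsf{m}|\le 3\tort n/2<2\tort n$) and length $4\tort n-|\mathsf{m}|$, which is less than $2|\mathsf{m}|$ exactly when $|\mathsf{m}|>4\tort n/3$.

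The main obstacle, which I expect to be the technical core of the proof, is the remaining subcase $m_1,m_2<\tort n$, $m_3>0$, and $\tort n<|\mathsf{m}|\le 4\tort n/3$, where none of the simple combinations above give a $\mathsf{b}$ of length $<2|\mathsf{m}|$. In this subcase I would argue that $\mathcal{G}(g_1,g_2,g_3)$ must contain relations outside $\tort n\mathbb{Z}^3$: otherwise every element of $\mathcal{B}(g_1,g_2,g_3)$ would have length divisible by $\tort n$, forcing $|\mathsf{m}|\ge 2\tort n$ and contradicting $|\mathsf{m}|\le 3\tort n/2$. I would then select a short non-trivial relation $\mathsf{r}\in\mathcal{G}(g_1,g_2,g_3)$, write it as a difference of two atoms of $\mathcal{B}(g_1,g_2,g_3)$ whose lengths are bounded by the Davenport constant of $\langle g_1,g_2,g_3\rangle$ (at most $\tort n+n-1$ for a rank-two subgroup), and use it, combined with appropriate $\tort n\mathsf{e}_i$ terms and $\pm\mathsf{m}$, to produce either a block $\mathsf{b}$ satisfying the hypotheses of Lemma~\ref{even} or a direct integer combination of blocks of length $<|\mathsf{m}|$ summing to $\mathsf{m}$. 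The precise construction requires further sub-division according to the size of $m_3$ relative to $\tort n$ (the sub-case $m_3\ge\tort n$ forces $m_1+m_2\le\tort n/3$, which gives more room for smaller combinations).
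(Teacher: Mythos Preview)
Your overall strategy---build a block $\mathsf{b}$ with all-even entries and invoke Lemma~\ref{even}---matches the paper's, and your easy cases are correct (though note that $m_i\ge\tort n$ and your later ``$m_3\ge\tort n$'' are vacuous once you assume $\mathsf{m}$ is a group atom, since then $m_i<\ord(g_i)=\tort n$). The genuine gap is your ``main obstacle'' subcase $\tort n<|\mathsf{m}|\le\tfrac{4}{3}\tort n$, which is roughly two thirds of the range left after Lemma~\ref{m*geqm}, and where your plan is too vague to succeed. Writing an unspecified relation as a difference of two atoms bounded by $\mathsf{D}(\langle g_1,g_2,g_3\rangle)\le\tort n+n-1$ does not give enough control: that bound can exceed $|\mathsf{m}|$, so such atoms need not be short enough for condition~(i), and an arbitrary relation has no reason to have the parity pattern needed for condition~(ii).

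What the paper exploits, and what your sketch is missing, is a specific structural source of short relations: since $n=n_2$ kills every direct summand of $G$ except $C_{\tort n}$, the elements $ng_1,ng_2,ng_3$ all lie in, and generate, the \emph{same} cyclic subgroup of order $\tort$ inside $C_{\tort n}$. This produces three explicit blocks $\mathsf{x}=[xn,n,0]$, $\mathsf{y}=[n,yn,0]$, $\mathsf{z}=[n,n,zn]$ with $x,y,z\in\{0,\dots,\tort-1\}$, each of length at most $\tort n<|\mathsf{m}|$ (or, when $z=\tort-1$, a sum of such). Because their entries are multiples of the odd number $n$, adding or subtracting one of $\mathsf{x},\mathsf{y},\mathsf{z}$ flips the parity of exactly those coordinates in its support. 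The paper then case-splits on the \emph{parities of $x,y,z$} and on whether $m_1,m_2$ lie below or above $n$ (not $\tort n$); in every branch a combination of $\pm\mathsf{m}$ with one of $\mathsf{x},\mathsf{y},\mathsf{z}$ and some $\tort n\,\mathsf{e}_i$'s yields a $\mathsf{b}$ satisfying Lemma~\ref{even}. Your coarser thresholds at $\tort n$ are not aligned with these auxiliary blocks, which is why the construction stalls in your hard case.
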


\begin{proof}
Assume without loss of generality that $m_3$ is even, $m_1$ and $m_2$ are odd, and $m_1\leq m_2$. Note that $\tort n$, $n$ and $\tort$ are odd, since $|G|$ is odd. 

Since $\ord(g_1)=\ord(g_2)=\tort n$, both $ng_1$ and $-ng_2$ are generators of the order $\tort$ subgroup of the direct summand $C_{\tort n}$ of $G$. Hence there exists a positive integer $x\in\{0,...,\tort-1\}$ such that $-ng_2=xng_1$. Then $\mathsf{x}=[xn,n,0]\in\mathcal{B}(g_1,g_2,g_3)$, moreover $|\mathsf{x}|=n(x+1)\leq \tort n<|\mathsf{m}|$. Similarly, we have a positive integer $y\in\{0,...,\tort-1\}$, for which $\mathsf{y}=[n,yn,0]\in\mathcal{B}(g_1,g_2,g_3)$, and $|\mathsf{y}|\leq \tort n<|\mathsf{m}|$. Since $\ord(g_1)=\ord(g_2)=\tort n$, we have that $x\neq 0,y\neq 0$. Finally, by the same technique, we get a positive integer $z\in\{0,...,\tort-1\}$, for which $\mathsf{z}=[n,n,zn]\in\mathcal{B}(g_1,g_2,g_3)$. Note that if one of the equalities $x=1$; $y=1$; $z=0$ holds, then the other two also hold, yielding that $\mathsf{x=y=z}=[n,n,0]$.\par

If $z\leq \tort-2$, then $|\mathsf{z}|\leq 2n+n(\tort-2)=\tort n<|\mathsf{m}|$. If $z=\tort-1$, then $|\mathsf{z}|>\tort n$. Note that all the elements in $\mathcal{B}(g_1,g_2,g_3)$ of type $[\nu_1 n,\nu_2 n,\nu_3 n]$ correspond to elements $[\nu_1,\nu_2,\nu_3]\in\mathcal{B}(ng_1,ng_2,ng_3)$, where $ng_1,ng_2,ng_3$ are generators of the order $\tort$ subgroup of the direct summand $C_{\tort n}$ of $G$. Since $\beta(C_{\tort})=\tort$, atoms of $\mathcal{B}(C_{\tort})$ have length at most $\tort$, so atoms of $\mathcal{B}(g_1,g_2,g_3)$ of type $[\nu_1 n,\nu_2 n,\nu_3 n]$ have length at most $\tort n$. So if $z=\tort-1$, then $\mathsf{z}$ can be written as sum of elements of length at most $\tort n$.\par

Using $\mathsf{x,y,z}$ and $\mathsf{e_i}$ for $i=1,2,3$, we construct an integral linear combination $\lambda_1\mathsf{q_1}+...+\lambda_{t}\mathsf{q_t}+\varepsilon\mathsf{m=\linkomb}=[\linkomb_1,\linkomb_2,\linkomb_3]\in\mathcal{B}(g_1,g_2,g_3)$ fulfilling conditions (i) and (ii) of Lemma \ref{even}. Since $|\mathsf{e_i}|=\tort n<|\mathsf{m}|$, $|\mathsf{x}|<|\mathsf{m}|$, $|\mathsf{y}|<|\mathsf{m}|$ and either $|\mathsf{z}|<|\mathsf{m}|$, or $\mathsf{z}$ is the sum of elements of length at most $\tort n<|\mathsf{m}|$ (as described above), the first condition is automatically satisfied. We will distinguish cases depending on the parity of $x,y$ and $z$ and the size of $m_i$ compared to $n$.
\begin{enumerate}
    \item $x$ is odd:\\
    $\mathsf{\linkomb}:=\mathsf{m+x}$, so $[\linkomb_1,\linkomb_2,\linkomb_3]=[xn+m_1,n+m_2,m_3]$\\
    Here $0\leq\linkomb_1$ and $0\leq\linkomb_2$ are even, since $m_1,m_2,x$ and $n$ are odd, while $0\leq b_3=m_3$ is also even. Moreover, $\linkomb_1+\linkomb_2+\linkomb_3=|\mathsf{m}|+|\mathsf{x}|< 2|\mathsf{m}|$.
    \item $y$ is odd: similar to (1).
    \item $z$ is even and $z<\tort-1$:\\
    $\mathsf{\linkomb:=m+z}$, so $[\linkomb_1,\linkomb_2,\linkomb_3]=[m_1+n,m_2+n,m_3+zn]$\\ 
    $z\leq\tort -3$, so $|\mathsf{z}|=(z+2)n<\tort n<|\mathsf{m}|$. Hence $\linkomb_1+\linkomb_2+\linkomb_3=|\mathsf{m}|+|\mathsf{z}|< 2|\mathsf{m}|$.
\end{enumerate}
From now on, we assume that $x$ and $y$ are even, and $z$ is odd or $z=\tort-1$.
\begin{enumerate}\setcounter{enumi}{3}   
    \item $m_1<n\leq m_2$:\\
    $\mathsf{\linkomb}:=\tort n\mathsf{e_1}+\mathsf{m-x}$, so $[\linkomb_1,\linkomb_2,\linkomb_3]=[m_1+\tort n-xn,m_2-n,m_3]$\\
    $m_1+\tort n-xn+m_2-n+m_3=|\mathsf{m}|+\tort n-(x+1)n<2|\mathsf{m}|$.
    \item \begin{enumerate}
        \item $n\leq m_1\leq m_2$ and $z$ is odd:\\    
            $\mathsf{\linkomb:=m-z}+\tort n\mathsf{e_3}$, so $[\linkomb_1,\linkomb_2,\linkomb_3]=[m_1-n,m_2-n,m_3+\tort n-zn]$\\
            $m_1-n+m_2-n+m_3+\tort n-zn=|\mathsf{m}|+\tort n-(2+z)n<2|\mathsf{m}|$.
        \item $n\leq m_1\leq m_2$ and $z=\tort-1$:\\    
            $\mathsf{\linkomb:=m-z}+\tort n\mathsf{e_3}+\tort n\mathsf{e_3}$, so $[\linkomb_1,\linkomb_2,\linkomb_3]=[m_1-n,m_2-n,m_3+\tort n+n]$\\
            $m_1-n+m_2-n+m_3+\tort n+n=|\mathsf{m}|+\tort n-n<2|\mathsf{m}|$.
    \end{enumerate}
\end{enumerate}

\begin{enumerate}\setcounter{enumi}{5}
    \item \begin{enumerate}
        \item  $m_1\leq m_2<n$ and $z$ is odd:\\    
                    $\mathsf{\linkomb:=}\tort n\mathsf{e_3}+\mathsf{z-m}$, so $[\linkomb_1,\linkomb_2,\linkomb_3]=[n-m_1,n-m_2,\tort n+zn-m_3]$\\
                    $n-m_1+n-m_2+\tort n+zn-m_3=\tort n+(z+2)n-|\mathsf{m}|<2|\mathsf{m}|$.
                
        \item \begin{enumerate}
                \item $m_1\le m_2<n$, $z=\tort-1$, $m_3+n\leq \tort n$:\\  
                    $\mathsf{\linkomb:=z-m}$, so $[\linkomb_1,\linkomb_2,\linkomb_3]=[n-m_1,n-m_2,\tort n-n-m_3]$\\ 
                    $n-m_1+n-m_2+\tort n-n-m_3=\tort n+n-|\mathsf{m}|<2|\mathsf{m}|$.
                \item \begin{enumerate}
                        \item $m_1\le m_2<n$, $z=\tort-1$,              $m_3+n\geq \tort n$, $x=\tort-1$:\\
                           $\mathsf{b}:=\tort n\mathsf{e_1}+\tort n\mathsf{e_1}+\tort n\mathsf{e_2}+\tort n\mathsf{e_3}+\tort n\mathsf{e_3}-\mathsf{m-x-z}$, so\\
                           $[\linkomb_1,\linkomb_2,\linkomb_3]=[\tort n-m_1,\tort n-2n-m_2,\tort n+n-m_3]$\\
                           Here $\tort n-2n-m_2>0$, since $1<\tort$ is odd, and $m_2<n$.\\ 
                           $\tort n-m_1+\tort n-2n-m_2+\tort n+n-m_3=3\tort n-n-|\mathsf{m}|<2|\mathsf{m}|$.\\                        
                           In this case $\tort n\mathsf{e_1}+\tort n\mathsf{e_2-x-y}=[0,\tort n-(1+y)n,0]\in\mathcal{B}(g_1,g_2,g_3)$, hence $\tort n-(1+y)n$ is an integer multiple of $\ord(g_1)=\tort n$. This can happen only if $y=\tort-1$.   
                        \item $m_1\le m_2<n$, $z=\tort-1$,  
                           $m_3+n\geq \tort n$, $y=\tort-1$:\\ 
                           Similarly to (A). Moreover, $y=\tort-1$ implies $x=\tort-1$.
                        \item $m_1<n,m_2<n$, $z=\tort-1$, 
                           $m_3+n\geq \tort n$, $x\neq\tort-1$, $y\neq\tort-1$:\\ 
                           $\mathsf{b}:=\tort n\mathsf{e_1}+\tort n\mathsf{e_2+m-x-y-z}$, so\\
                           $[\linkomb_1,\linkomb_2,\linkomb_3]=[m_1+\tort n-(2+x)n,m_2+\tort n-(2+y)n,m_3-\tort n+n]$\\
                           Here $x\neq\tort-1$ is even, so $x\leq\tort-3$, implying $\tort-(2+x)\geq 1$. Hence $m_1+\tort n-(2+x)n>0$. Similarly, $m_1+\tort n-(2+y)n>0$.\\
                           $m_1+\tort n-(2+x)n+m_2+\tort n-(2+y)n+m_3-\tort n+n=|\mathsf{m}|+\tort n-(3+x+y)n<2|\mathsf{m}|$.
                    \end{enumerate}
                \end{enumerate}
        \end{enumerate} 
   \end{enumerate}
In all possible cases we presented an integral linear combination $\lambda_1\mathsf{q}_1+...+\lambda_{t}\mathsf{q}_t+\varepsilon\mathsf{m=\linkomb}$ fulfilling conditions (i) and (ii) of Lemma \ref{even}, hence $\mathsf{m}$ is not a group atom by Lemma \ref{even}.
\end{proof}

\section{A variant of Lemma \ref{m*geqm}}\label{sec:lemma}
The following notation and assumptions will be in effect all over Section \ref{sec:lemma}: 
\begin{itemize}

    \item $G$ is an abelian group 
    \item $g_1,g_2,g_3\in G$, where $\ord(g_1)\geq\ord(g_2)\geq\ord(g_3)$
    \item $\mathsf{m}=[m_1,m_2,m_3]\in\mathcal{B}(g_1,g_2,g_3)$  
    \item $H$ is a subgroup of $\langle g_1\rangle\cap\langle g_2\rangle\cap\langle g_3\rangle$ of order $\kn$
    \item $h_i:=\frac{\ord(g_i)}{\kn}g_i$, so $h_i$ is a generator of $H$ 
     for $i=1,2,3$
    \item $\also_i\in\{0,1,...,\kn-1\}$ with $\also_i\frac{\ord(g_i)}{\kn}\leq m_i<(\also_i+1)\frac{\ord(g_i)}{\kn}$
\end{itemize}
If $h_i=h_j$ for $i,j\in\{1,2,3\}$ and $i\neq j$, then we consider the block monoid $\mathcal{B}(h_1,h_2,h_3)$ in the obvious way. 
\begin{lemma}\label{vajon}
Take an element $\mathsf{u}=[u_1,u_2,u_3]\in\mathcal{B}(h_1,h_2,h_3)$. If $\kn>1$, then 
\begin{itemize}
    \item [(i)] $\mathsf{u}_{\kn}:=[u_1\frac{\ord(g_1)}{\kn},u_2\frac{\ord(g_2)}{\kn},u_3\frac{\ord(g_3)}{\kn}]\in\mathcal{B}(g_1,g_2,g_3)$.
    \item [(ii)] $|\mathsf{u}_{\kn}|\leq\frac{\ord(g_1)}{\kn}|\mathsf{u}|$
\end{itemize}
\end{lemma}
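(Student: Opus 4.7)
The plan is that both parts are essentially direct unpackings of definitions: part (i) will follow by substituting the definition $h_i = \frac{\ord(g_i)}{\kn} g_i$ into the zero-sum relation for $\mathsf{u}$, and part (ii) will follow by using the ordering $\ord(g_1) \geq \ord(g_2) \geq \ord(g_3)$ to bound each coordinate of $\mathsf{u}_\kn$ coordinatewise.

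For (i), the hypothesis $\mathsf{u} \in \mathcal{B}(h_1,h_2,h_3)$ means $u_1 h_1 + u_2 h_2 + u_3 h_3 = 0$ in $G$, with $u_i \in \mathbb{N}$. Substituting $h_i = \frac{\ord(g_i)}{\kn} g_i$ gives
\[ \sum_{i=1}^3 \Big(u_i \tfrac{\ord(g_i)}{\kn}\Big) g_i \;=\; \sum_{i=1}^3 u_i h_i \;=\; 0, \]
and since each entry $u_i \frac{\ord(g_i)}{\kn}$ is a non-negative integer, this says exactly that $\mathsf{u}_\kn \in \mathcal{B}(g_1,g_2,g_3)$. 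When two or more of the $h_i$ coincide, the phrase ``in the obvious way'' from the setup means we regard $\mathcal{B}(h_1,h_2,h_3)$ as the block monoid on the formal symbols $h_1,h_2,h_3$ (with repetitions allowed), so the zero-sum equation above is still the correct defining relation and the substitution goes through without change.

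For (ii), the ordering assumption $\ord(g_1) \geq \ord(g_i)$ for $i=1,2,3$ gives $\frac{\ord(g_i)}{\kn} \leq \frac{\ord(g_1)}{\kn}$, so
\[ |\mathsf{u}_\kn| \;=\; \sum_{i=1}^3 u_i \tfrac{\ord(g_i)}{\kn} \;\leq\; \tfrac{\ord(g_1)}{\kn} \sum_{i=1}^3 u_i \;=\; \tfrac{\ord(g_1)}{\kn} |\mathsf{u}|. \]
There is no real obstacle here: the content of the lemma is merely a ``lifting'' construction that transports zero-sum sequences over the subgroup $H$ (viewed via the generators $h_i$) back to zero-sum sequences over the original elements $g_i$, with a length-expansion factor controlled by the largest of the ratios $\ord(g_i)/\kn$. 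The hypothesis $\kn > 1$ plays no role in the formal verification and is only included because the conclusion is vacuous or uninteresting in the case $\kn = 1$ (where each $h_i = 0$).
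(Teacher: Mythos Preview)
Your proof is correct and follows exactly the same approach as the paper: substitute $h_i=\frac{\ord(g_i)}{\kn}g_i$ into the zero-sum relation for (i), and use $\ord(g_1)\geq\ord(g_i)$ to bound the sum for (ii). Your additional remarks about repeated $h_i$ and the role of the hypothesis $\kn>1$ are accurate side comments that the paper omits.
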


\begin{proof}
(i) Here $\sum_{i=1}^3u_ih_i=0\in H$ implies that $\sum_{i=1}^3u_i\frac{\ord(g_i)}{\kn}g_i=0\in G$. So $\mathsf{u}_{\kn}=[u_1\frac{\ord(g_1)}{\kn},u_2\frac{\ord(g_2)}{\kn},u_3\frac{\ord(g_3)}{\kn}]\in\mathcal{B}(g_1,g_2,g_3)$.\par

(ii) $|\mathsf{u}_{\kn}|=\sum_{i=1}^3u_i\frac{\ord(g_i)}{\kn}\leq\frac{\ord(g_1)}{\kn}\sum_{i=1}^3u_i=\frac{\ord(g_1)}{\kn}|\mathsf{u}|$.
\end{proof}

In the following lemma we improve inequality \eqref{mot}. Namely, we prove that under some specific conditions a positive term can be added to the left hand side and the inequality will still hold without modifying the right hand side. In this way we get a sharper upper bound for the length of the group atom in the given monoid. 

\begin{lemma}\label{ord}
Suppose that $\kn>1$ and $\mathsf{m}\in\mathcal{B}(g_1,g_2,g_3)$ is a group atom with $|\mathsf{m}|>\ord(g_1)$. Then the following hold:
\begin{itemize}
    \item [(i)] $\kn-1\geq\sum_{i=1}^3\also_i$
    \item [(ii)]  We have the equality $\sum_{i=1}^3\ord(g_i)\mathsf{e_i}=\mathsf{m}+\mathsf{\Tilde{m}}+\mathsf{m_{\egyes}}$ satisfying the following:
\end{itemize}   
\begin{itemize}
    \item $\mathsf{m_{\egyes}}=[\egyes_1\frac{\ord(g_1)}{\kn},\egyes_2\frac{\ord(g_2)}{\kn},\egyes_3\frac{\ord(g_3)}{\kn}]\in\mathcal{B}(g_1,g_2,g_3)$
    \item $\mathsf{m_{\egyes}}=\mathsf{\egyes}_{\kn}^1+...+\mathsf{\egyes}_{\kn}^N$
    \item for any $t\in\{1,...,N\}:$ $\mathsf{\egyes}_{\kn}^t\in\mathcal{B}(g_1,g_2,g_3)$ with $|\mathsf{\egyes}_{\kn}^t|\leq \ord(g_1)$.
    \item $\egyes_i\in\{0,1,...,\kn-1\}$ for $i=1,2,3$
    \item $\sum_{i=1}^3\egyes_i\geq \kn-1$
    \item $|\supp(\mathsf{m_{\egyes}})|\geq 2$
    \item $\mathsf{\Tilde{m}}\in\mathcal{B}(g_1,g_2,g_3)$ with $|\mathsf{\Tilde{m}}|\geq |\mathsf{m}|$
\end{itemize}
\begin{itemize}
\item [(iii)] $\kn\neq 2$, moreover, $2|\mathsf{m}|+\frac{\ord(g_{2})}{\kn}+(\kn-2)\frac{\ord(g_{3})}{\kn}\leq \sum_{i=1}^3\ord(g_i)$
\end{itemize}
\end{lemma}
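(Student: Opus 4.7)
All three parts extend the argument of Lemma~\ref{m*geqm}: setting $\mathsf{m}^*:=\sum_{i=1}^3\ord(g_i)\mathsf{e_i}-\mathsf{m}$, the group atom hypothesis already forces $\mathsf{m}^*\in\mathcal{B}(g_1,g_2,g_3)$ with $|\mathsf{m}^*|\geq|\mathsf{m}|$. The point is to subtract from $\mathsf{m}^*$ a carefully chosen element $\mathsf{m_\egyes}$ that itself splits into pieces of length at most $\ord(g_1)$, thereby both ruling out large values of $\sum\also_i$ and sharpening Lemma~\ref{m*geqm}.

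For (i), I would argue by contradiction: suppose $\sum_{i=1}^3\also_i\geq\kn$. Since the Davenport constant of $H\cong C_\kn$ equals $\kn$, the sequence over $H$ consisting of $\also_i$ copies of $h_i$ ($i=1,2,3$) contains a nonempty zero-sum subsequence $[\egyes_1,\egyes_2,\egyes_3]$ with $0\leq\egyes_i\leq\also_i$. By Lemma~\ref{vajon} the lifted element $\mathsf{u}_\kn:=[\egyes_1\frac{\ord(g_1)}{\kn},\egyes_2\frac{\ord(g_2)}{\kn},\egyes_3\frac{\ord(g_3)}{\kn}]$ lies in $\mathcal{B}(g_1,g_2,g_3)$, satisfies $\mathsf{u}_\kn\leq\mathsf{m}$ coordinatewise because $\egyes_i\frac{\ord(g_i)}{\kn}\leq\also_i\frac{\ord(g_i)}{\kn}\leq m_i$, and has $|\mathsf{u}_\kn|\leq\ord(g_1)<|\mathsf{m}|$. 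Hence $\mathsf{m}-\mathsf{u}_\kn$ is a nonzero element of $\mathcal{B}(g_1,g_2,g_3)$, and the decomposition $\mathsf{m}=\mathsf{u}_\kn+(\mathsf{m}-\mathsf{u}_\kn)$ contradicts atomicity of $\mathsf{m}$.

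For (ii), I would construct $[\egyes_1,\egyes_2,\egyes_3]\in\mathcal{B}(h_1,h_2,h_3)$ satisfying all of $\egyes_i\leq\kn-1$, $\egyes_i\frac{\ord(g_i)}{\kn}\leq\ord(g_i)-m_i$, $\sum\egyes_i\geq\kn-1$, and $|\supp([\egyes_1,\egyes_2,\egyes_3])|\geq 2$. The bound $\sum\also_i\leq\kn-1$ from (i) provides the necessary room, but the existence argument is the main technical obstacle; it requires case analysis according to the values of $\also_i$, the remainders $r_i:=m_i-\also_i\frac{\ord(g_i)}{\kn}$, and the way $h_1,h_2,h_3$ generate $H$. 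Once $\egyes$ is found, split $[\egyes_1,\egyes_2,\egyes_3]$ into atoms of $\mathcal{B}(h_1,h_2,h_3)$---each of length at most $\mathsf{D}(C_\kn)=\kn$---and lift each via Lemma~\ref{vajon} to a piece $\mathsf{\egyes}_{\kn}^t\in\mathcal{B}(g_1,g_2,g_3)$ of length at most $\ord(g_1)$. Setting $\mathsf{\Tilde{m}}:=\mathsf{m}^*-\mathsf{m_\egyes}$, the identity $\mathsf{m}=\sum_{i=1}^3\ord(g_i)\mathsf{e_i}-\mathsf{\Tilde{m}}-\sum_t\mathsf{\egyes}_{\kn}^t$ combined with the group-atom hypothesis (all subtracted summands have length at most $\ord(g_1)<|\mathsf{m}|$) then forces $|\mathsf{\Tilde{m}}|\geq|\mathsf{m}|$.

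Part (iii) has two halves. To exclude $\kn=2$, note that (i) yields $\sum\also_i\leq 1$, and a direct check shows that at least one of $\egyes\in\{[0,1,1],[1,0,1],[1,1,0]\}$ always satisfies the constraints of (ii), giving $|\mathsf{m_\egyes}|\geq\frac{\ord(g_2)+\ord(g_3)}{2}$; then $|\mathsf{\Tilde{m}}|\geq|\mathsf{m}|>\ord(g_1)$ together with $|\mathsf{\Tilde{m}}|=\sum\ord(g_i)-|\mathsf{m}|-|\mathsf{m_\egyes}|$ and $\ord(g_2),\ord(g_3)\leq\ord(g_1)$ produces the contradiction $2\ord(g_1)<2|\mathsf{m}|\leq 2\ord(g_1)$. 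For $\kn\geq 3$, the claimed inequality follows by minimising $|\mathsf{m_\egyes}|=\sum\egyes_i\frac{\ord(g_i)}{\kn}$ subject to the constraints of (ii) together with $\ord(g_1)\geq\ord(g_2)\geq\ord(g_3)$: the minimum is attained at $\egyes=[0,1,\kn-2]$, yielding $|\mathsf{m_\egyes}|\geq\frac{\ord(g_2)}{\kn}+(\kn-2)\frac{\ord(g_3)}{\kn}$, which substituted into $2|\mathsf{m}|+|\mathsf{m_\egyes}|\leq\sum_{i=1}^3\ord(g_i)$ produces the asserted bound.
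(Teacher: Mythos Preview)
Your treatments of (i) and (iii) are essentially the paper's argument. The gap is in (ii): you flag the construction of $[\egyes_1,\egyes_2,\egyes_3]$ as ``the main technical obstacle'' requiring case analysis on the $\also_i$, the remainders $r_i$, and the structure of $\langle h_1,h_2,h_3\rangle$, but you do not carry it out. In fact no case analysis is needed, and the remainders $r_i$ play no role. The paper's construction is a uniform greedy extraction on the \emph{complementary} sequence: since $\sum_{i=1}^3\also_i\leq\kn-1$ by (i), the counts $\kn-1-\also_i$ satisfy $\sum_{i=1}^3(\kn-1-\also_i)\geq 2(\kn-1)\geq\kn$, so the sequence over $H$ with $\kn-1-\also_i$ copies of $h_i$ contains an atom $\mathsf{\egyes}^1$ of $\mathcal{B}(h_1,h_2,h_3)$; subtract it and repeat until the residual counts sum to less than $\kn$, and let $[\egyes_1,\egyes_2,\egyes_3]$ be the total extracted. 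By construction $\egyes_i\leq\kn-1-\also_i$ for each $i$, which simultaneously gives $\egyes_i\leq\kn-1$ and the nonnegativity of the coordinates of $\mathsf{\Tilde{m}}$ (since $m_i<(\also_i+1)\frac{\ord(g_i)}{\kn}$ forces $\ord(g_i)-m_i>(\kn-1-\also_i)\frac{\ord(g_i)}{\kn}\geq\egyes_i\frac{\ord(g_i)}{\kn}$), while the stopping condition $\sum_i(\kn-1-\also_i-\egyes_i)<\kn$ immediately yields $\sum_i\egyes_i\geq\kn-1$. The support bound follows because $\egyes_i\leq\kn-1$ makes each coordinate of $\mathsf{m_\egyes}$ strictly less than $\ord(g_i)$, so a single nonzero coordinate cannot give a zero-sum. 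Once (ii) is established, your minimisation argument for (iii) goes through verbatim; for $\kn=2$ the greedy step already returns one of the three vectors you listed, so your separate ``direct check'' there is subsumed.
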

\begin{proof}
(i) Suppose that $\sum_{i=1}^3\also_i\geq\kn$. Since $\beta(H)=|H|=\kn$, there exists an atom $\mathsf{u}=[u_1,u_2,u_3]\in\mathcal{B}(h_1,h_2,h_3)$, for which $u_i\leq \also_i$ for each $i=1,2,3$. Of course, $|\mathsf{u}|=\sum_{i=1}^3u_i\leq\beta(H)=\kn$. So for the coordinates of $\mathsf{u}_{\kn}=[u_1\frac{\ord(g_1)}{\kn},u_2\frac{\ord(g_1)}{\kn},u_3\frac{\ord(g_3)}{\kn}]\in\mathcal{B}(g_1,g_2,g_3)$ we have: $u_i\frac{\ord(g_i)}{\kn}\leq \also_i\frac{\ord(g_i)}{\kn}\leq m_i$. Hence $\mathsf{m-u_{\kn}}\in\mathcal{B}(g_1,g_2,g_3)$, and $\mathsf{m=u_{\kn}+(m-u_{\kn})}$, which implies that $\mathsf{m=u_{\kn}}$, since $\mathsf{m}$ is supposed to be a group atom in $\mathcal{B}(g_1,g_2,g_3)$. We get that $u_i\frac{\ord(g_i)}{\kn}=m_i$ for each $i=1,2,3$. If $|\mathsf{u}|=\kn$, then by Lemma \ref{vajon} (ii) we get that $|\mathsf{u}_{\kn}|\leq \ord(g_1)<|\mathsf{m}|$, contradiction. If $\kn-1\geq|\mathsf{u}|$, then $\kn-1\geq\sum_{i=1}^3u_i=\sum_{i=1}^3\also_i$, contradiction.\par
So we get that the indirect assumption was incorrect, hence $\kn-1\geq\sum_{i=1}^3\also_i$.\par 

(ii) Since $\kn>\sum_{i=1}^3\also_i$, we get that $\sum_{i=1}^3(\kn-\also_i-1)\geq \kn$, hence there exists an atom $\mathsf{\egyes}^1=[\egyes_1^1,\egyes_2^1,\egyes_3^1]\in\mathcal{B}(h_1,h_2,h_3)$, with $\egyes_i^1\leq \kn-\also_i-1$ for each $i=1,2,3$ (of course, $|\mathsf{\egyes}^1|\leq\beta(H)=\kn$). If $\sum_{i=1}^3(\kn-\also_i-1-\egyes_i^1)\geq \kn$, then there exists an atom $\mathsf{\egyes}^2=[\egyes_1^2,\egyes_2^2,\egyes_3^2]\in\mathcal{B}(h_1,h_2,h_3)$, such that $\egyes_i^1+\egyes_i^2\leq \kn-\also_i-1$ for each $i=1,2,3$. We continue this process until we have the sum $\mathsf{\egyes}^1+...+\mathsf{\egyes}^N=[\egyes_1,\egyes_2,\egyes_3]\in\mathcal{B}(h_1,h_2,h_3)$ satisfying the following:
\begin{itemize}
    \item [(a)] $\egyes_i\leq \kn-\also_i-1$ for $i=1,2,3$
    \item [(b)] for each atom $\mathsf{f}=[f_1,f_2,f_3]\in\mathcal{B}(h_1,h_2,h_3)$ there exists at least one index $i_0\in\{1,2,3\}$, for which $\egyes_{i_0}+f_{i_0}>\kn-\also_{i_0}-1$
\end{itemize}
(we may get different elements $[\egyes_1,\egyes_2,\egyes_3]$ for different choices of atoms $\mathsf{\egyes}^1,...,\mathsf{\egyes}^N$)\\
By Lemma \ref{vajon}, for each element $\mathsf{\egyes}^t\in\mathcal{B}(h_1,h_2,h_3)$ we get an element $\mathsf{\egyes}^t_{\kn}\in\mathcal{B}(g_1,g_2,g_3)$, such that $|\mathsf{\egyes}^t_{\kn}|\leq \ord(g_1)<|\mathsf{m}|$. Let us have the notation $\mathsf{m_{\egyes}}:=\mathsf{\egyes}^1_{\kn}+...+\mathsf{\egyes}^N_{\kn}\in\mathcal{B}(g_1,g_2,g_3)$ and $\mathsf{\Tilde{m}}:=\sum_{i=1}^3\ord(g_i)\mathsf{e_i}-\mathsf{m}-(\mathsf{\egyes}^1_{\kn}+...+\mathsf{\egyes}^N_{\kn})$. Thus we have the formula:
\begin{equation}\label{n+nh+nb}
\sum_{i=1}^3\ord(g_i)\mathsf{e_i}=\mathsf{m}+\mathsf{\Tilde{m}}+\mathsf{m_{\egyes}}
\end{equation}
By condition (a) we get that $\egyes_i\in\{0,...,\kn-1\}$. This also implies that for each coordinate of $\mathsf{m_{\egyes}}$ we have $\egyes_i\frac{\ord(g_i)}{\kn}<\ord(g_i)$, so $|\supp(\mathsf{m_{\egyes}})|>1$. By condition (b) we have that $\sum_{i=1}^3(\kn-\also_i-1-\egyes_i)< \kn$. Hence: 
\begin{center}
    $\sum_{i=1}^3\egyes_i\geq\sum_{i=1}^3(\kn-\also_i-1)-(\kn-1)\geq 3(\kn-1)-(\sum_{i=1}^3\also_i)-(\kn-1)\geq(\kn-1)$.
\end{center}
 $\mathsf{\Tilde{m}}\in\mathcal{B}(g_1,g_2,g_3)$, since its coordinates are nonnegative by condition (a). The formula 
\begin{center}
$\sum_{i=1}^3\ord(g_i)\mathsf{e_i}=\mathsf{m}+\mathsf{\Tilde{m}}+\mathsf{\egyes}^1_{\kn}+...+\mathsf{\egyes}_{\kn}^N$
\end{center}
consists of elements with length strictly smaller than $|\mathsf{m}|$, apart from $\mathsf{\Tilde{m}}$ and $\mathsf{m}$. Since $\mathsf{m}$ is supposed to be a group atom in $\mathcal{B}(g_1,g_2,g_3)$, this can happen only if $|\mathsf{\Tilde{m}}|\geq |\mathsf{m}|$.\par

(iii) Since $|\supp(\mathsf{m_{\egyes}})|\geq 2$, the least possible value of $|\mathsf{m_{\egyes}}|$ is reached, when $\egyes_1=0$ and $\egyes_2=\egyes_3=1$. However, if $\kn=2$, then this would give a contradiction:
\begin{center}
    $\sum_{i=1}^3\ord(g_i)\geq 2|\mathsf{m}|+\frac{\ord(g_2)}{2}+\frac{\ord(g_3)}{2}>2\ord(g_1)+\frac{\ord(g_2)}{2}+\frac{\ord(g_3)}{2}=\ord(g_1)+\frac{\ord(g_1)+\ord(g_2)}{2}+\frac{\ord(g_1)+\ord(g_3)}{2}\geq \sum_{i=1}^3\ord(g_i)$
\end{center}

Let us have $\kn>2$. Since $\ord(g_{1})\geq\ord(g_2)\geq\ord(g_3)$, $\sum_{i=1}^3\egyes_i\geq \kn-1$ and $|\supp(\mathsf{m_{\egyes}})|\geq 2$, the least possible value of $|\mathsf{m_{\egyes}}|$ is reached, when $\egyes_2=1$ and $\egyes_3=\kn-2$. By taking the length of the elements in \eqref{n+nh+nb} and using that $|\mathsf{\Tilde{m}|\geq|\mathsf{m}|}$, we get
\[\sum_{i=1}^3\ord(g_i)\geq 2|\mathsf{m}|+\frac{\ord(g_{2})}{\kn}+(\kn-2)\frac{\ord(g_{3})}{\kn}\qedhere\]
\end{proof}
\noindent
In addition to the notation set at the beginning of Section \ref{sec:lemma}, we assume the following:
\begin{itemize}
\item $G=C_{n_1}\oplus ...\oplus C_{n_r}$ is an abelian group with $r\geq 2$, such that $1<\tort:=\frac{n_1}{n_2}$
    \item $p$ is the minimal prime divisor of $n=n_2$
    \item $g_i=(g_{i,1},\sigma_i)\in C_{n_1}\oplus...\oplus C_{n_r}$, where $g_{i,1}\in C_{n_1}$ and $\sigma_i\in C_{n_2}\oplus...\oplus C_{n_r}$
    \item $\kn:=\gcd\left( \frac{\ord(g_1)}{\ord(\sigma_{1})},\frac{\ord(g_2)}{\ord(\sigma_{2})},\frac{\ord(g_3)}{\ord(\sigma_{3})}\right)$, 
\end{itemize}
Then $\langle g_1\rangle\cap\langle g_2\rangle\cap\langle g_3\rangle$ contains  a cyclic subgroup of order $\kn$ (contained also in the direct summand $C_{\tort n}$ of $G$).

\begin{proposition}\label{o>1}
Let  $\ord(g_1)=\ord(g_2)=\tort n$.
\begin{itemize}
    \item [(i)] if $\ord(g_3)$ does not divide $n$, then $\kn>1$
    \item [(ii)] $\frac{\ord(g_3)}{\kn}\leq n$
\end{itemize}

\end{proposition}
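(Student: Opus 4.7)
The plan is to argue prime-by-prime via $q$-adic valuations $v_q$. The crucial identity is that for each $i$, $\ord(g_i)=\mathrm{lcm}(\ord(g_{i,1}),\ord(\sigma_i))$, where $\ord(g_{i,1})\mid n_1=\tort n$ and $\ord(\sigma_i)\mid n$. Hence for every prime $q$, $v_q(\ord(\sigma_i))\le v_q(n)$. For $i=1,2$ we are told $\ord(g_i)=\tort n$, so
\[
v_q\!\left(\tfrac{\ord(g_i)}{\ord(\sigma_i)}\right)=v_q(\tort n)-v_q(\ord(\sigma_i))\ge v_q(\tort),
\]
which is the basic lower bound I will use repeatedly.

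For (i), assume $\ord(g_3)\nmid n$ and pick a prime $q$ with $v_q(\ord(g_3))>v_q(n)$. Since $\ord(g_3)\mid n_1=\tort n$, one gets $v_q(\tort)\ge v_q(\ord(g_3))-v_q(n)>0$. Combining with the bound above, the first two arguments of the gcd defining $\kn$ have $v_q\ge v_q(\tort)>0$. For the third, using $\ord(\sigma_3)\mid n$,
\[
v_q\!\left(\tfrac{\ord(g_3)}{\ord(\sigma_3)}\right)\ge v_q(\ord(g_3))-v_q(n)>0.
\]
Therefore $q\mid \kn$, so $\kn>1$.

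For (ii), I will show the stronger divisibility statement $\ord(g_3)\mid \kn\cdot n$, which immediately gives $\ord(g_3)/\kn\le n$. Fix a prime $q$; I must check $v_q(\ord(g_3))\le v_q(\kn)+v_q(n)$. If $v_q(\ord(g_3))\le v_q(n)$ this is automatic. Otherwise, set $\delta:=v_q(\ord(g_3))-v_q(n)>0$. Since $\ord(g_3)\mid \tort n$, one has $\delta\le v_q(\tort)$. The bound derived above then gives $v_q(\ord(g_i)/\ord(\sigma_i))\ge v_q(\tort)\ge\delta$ for $i=1,2$, while using $\ord(\sigma_3)\mid n$ directly yields $v_q(\ord(g_3)/\ord(\sigma_3))\ge\delta$. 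Taking the minimum of the three gives $v_q(\kn)\ge\delta$, as required.

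There is no serious obstacle; the only thing to be attentive to is that the valuation bounds on the three arguments of $\kn$ must be compared against a single quantity ($v_q(\tort)$ or $\delta$), and the estimate $\delta\le v_q(\tort)$ coming from $\ord(g_3)\mid n_1$ is what makes the argument close. Everything else is a routine accounting exercise in prime factorizations.
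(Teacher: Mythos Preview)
Your argument is correct. Both parts rely on the same two elementary facts the paper uses: $\ord(\sigma_i)\mid n$ (since $\sigma_i$ lies in the rank-$(r-1)$ part whose exponent is $n$) and $\ord(g_3)\mid\tort n$. From these, your valuation bookkeeping is sound: for $i=1,2$ one gets $v_q(\ord(g_i)/\ord(\sigma_i))\ge v_q(\tort)$, and for $i=3$ one gets $v_q(\ord(g_3)/\ord(\sigma_3))\ge v_q(\ord(g_3))-v_q(n)$; both (i) and (ii) then follow exactly as you say.

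The paper's proof has the same content but is packaged with direct gcd/divisibility manipulations rather than prime-by-prime valuations. For (i), instead of exhibiting a prime dividing $\kn$, the paper first observes $\gcd(\tort,\ord(g_3)/\ord(\sigma_3))\mid\kn$ and then derives a contradiction from $\gcd(\tort,\ord(g_3)/\ord(\sigma_3))=1$ by deducing $\ord(g_3)\mid n$. For (ii), the paper asserts the single-line inequality $\kn\ge\gcd(\tort n,\tort n,\ord(g_3))/n=\ord(g_3)/n$; your proof of the stronger divisibility $\ord(g_3)\mid\kn\cdot n$ is a slightly more transparent justification of that same inequality. So the routes differ only in presentation: your valuation framework treats (i) and (ii) uniformly and avoids the contradiction step, while the paper's version is terser but relies on the reader unpacking the gcd inequalities.
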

\begin{proof}
(i) Since $\kn\geq \gcd\left( \frac{\tort n}{n},\frac{\tort n}{n},\frac{\ord(g_3)}{\ord(\sigma_{3})}\right)$, it is enough to show that $\gcd\left(\tort,\frac{\ord(g_3)}{\ord(\sigma_{3})}\right)>1$. Suppose for contradiction, that $\gcd\left(\tort,\frac{\ord(g_3)}{\ord(\sigma_{3})}\right)=1$.\par

Then $\ord(\sigma_{3}) \frac{\ord(g_3)}{\ord(\sigma_{3})}=\ord(g_3)\mid \tort n=\tort\frac{n}{\ord(\sigma_{3})}\ord(\sigma_{3})$ hence $\frac{\ord(g_3)}{\ord(\sigma_{3})}\mid \tort\frac{n}{\ord(\sigma_{3})}$ so $\frac{\ord(g_3)}{\ord(\sigma_{3})}\mid \frac{n}{\ord(\sigma_{3})}$. This implies that $\ord(g_3)=\ord(\sigma_{3})\frac{\ord(g_3)}{\ord(\sigma_{3})}\mid \ord(\sigma_{3})\frac{n}{\ord(\sigma_{3})}=n$, contradicting the assumption on $\ord(g_3)$.\par

(ii) $\kn\geq\frac{\gcd(\tort n,\tort n,\ord(g_3))}{n}=\frac{\ord(g_3)}{n}$, so $\frac{\ord(g_3)}{\kn}\leq n$.
\end{proof}

\begin{proposition}\label{mgeqkn}
Let $\mathsf{m}\in\mathcal{B}(g_1,g_2,g_3)$ be a group atom with $|\mathsf{m}|>\ord(g_1)$.
\begin{itemize}
    \item [(i)] If $\ord(g_2)<\tort n$, then $|\mathsf{m}|\leq \tort n$.
    \item [(ii)] If $\ord(g_1)=\ord(g_2)=\tort n$ and $\ord(g_3)\nmid n$, then $\ord(g_3)=\tort n$ and we have the inequality 
    \begin{equation}\label{nodd}
\tort n+\frac{\ord(g_3)-\frac{\tort n}{2}}{\kn}\geq |\mathsf{m}|.
\end{equation}
\end{itemize}
\end{proposition}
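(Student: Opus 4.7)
The plan is to derive both parts from the length upper bounds already established, namely Lemma \ref{m*geqm} for (i) and Lemma \ref{ord}(iii) for (ii), combined with the basic observation that each $\ord(g_i)$ divides $\exp(G)=\tort n$.

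For part (i), I would use that $\ord(g_2)\mid\tort n$ together with $\ord(g_2)<\tort n$ means $\ord(g_2)$ is a proper divisor of $\tort n$, hence $\ord(g_2)\le\tort n/p_0$ where $p_0$ is the least prime divisor of $\tort n$; in particular $\ord(g_2)\le\tort n/2$. By $\ord(g_3)\le\ord(g_2)$ the same bound applies to $\ord(g_3)$, while $\ord(g_1)\le\tort n$ always holds. Since $|\mathsf{m}|>\ord(g_1)=\max\ord(g_i)$, Lemma \ref{m*geqm} gives $2|\mathsf{m}|\le\ord(g_1)+\ord(g_2)+\ord(g_3)\le\tort n+\tort n/2+\tort n/2=2\tort n$, yielding $|\mathsf{m}|\le\tort n$.

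For part (ii), I would first invoke Proposition \ref{o>1}(i) to obtain $\kn>1$, which makes Lemma \ref{ord} applicable. Lemma \ref{ord}(iii) then gives $\kn\ge 3$ and
\[2|\mathsf{m}|+\frac{\ord(g_{2})}{\kn}+(\kn-2)\frac{\ord(g_{3})}{\kn}\le\ord(g_1)+\ord(g_2)+\ord(g_3).\]
Substituting $\ord(g_1)=\ord(g_2)=\tort n$ and rearranging the terms involving $\ord(g_3)$, this collapses to
\[|\mathsf{m}|\le\tort n+\frac{\ord(g_3)-\tort n/2}{\kn},\]
which is precisely the stated bound. It remains to show $\ord(g_3)=\tort n$. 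Arguing by contradiction, if $\ord(g_3)<\tort n$, then $\ord(g_3)$ is a proper divisor of $\tort n$, so $\ord(g_3)\le\tort n/2$; the right-hand side of the displayed inequality is then at most $\tort n$, contradicting $|\mathsf{m}|>\ord(g_1)=\tort n$.

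No serious obstacle is anticipated: the proof is essentially algebraic bookkeeping. The only delicate point is in part (ii), where one has to verify carefully that substituting $\ord(g_1)=\ord(g_2)=\tort n$ into the inequality of Lemma \ref{ord}(iii) genuinely produces the compact form with numerator $\ord(g_3)-\tort n/2$, i.e.\ the coefficient $\kn-2$ cancels correctly against the single copy of $\ord(g_3)$ on the right. Once this rearrangement is performed, both parts follow immediately.
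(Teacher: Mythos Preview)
Your proposal is correct and follows essentially the same route as the paper: part (i) is exactly the paper's argument via Lemma~\ref{m*geqm} and the divisor bound $\ord(g_2),\ord(g_3)\le \tort n/2$, and part (ii) is likewise the paper's argument via Proposition~\ref{o>1}(i) to get $\kn>1$ followed by the inequality of Lemma~\ref{ord}(iii), with the same rearrangement. The only cosmetic difference is in concluding $\ord(g_3)=\tort n$: the paper observes directly that the inequality forces $\ord(g_3)>\tort n/2$ and hence $\ord(g_3)=\tort n$, whereas you phrase this as a contradiction; these are logically equivalent.
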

\begin{proof}
(i) Lemma \ref{m*geqm} gives that $2|\mathsf{m}|\leq \ord(g_1)+\ord(g_2)+\ord(g_3)\leq \tort n+2\frac{\tort n}{2}$.\par

(ii) By choice of $\kn$, we have $\langle \frac{\tort n}{\kn}g_1\rangle=\langle \frac{\tort n}{\kn}g_2\rangle=\langle \frac{\ord(g_3)}{\kn}g_3\rangle\cong C_{\kn}$. By Proposition \ref{o>1} we get that $\kn>1$, hence we can use Lemma \ref{ord} (iii) for the group atom $\mathsf{m}$:
\[2\tort n+\ord(g_3)\geq2|\mathsf{m}|+\frac{\tort n}{\kn}+(\kn-2)\frac{\ord(g_3)}{\kn}\]
After rearranging, we obtain inequality \eqref{nodd}. Since $|\mathsf{m}|>\ord(g_1)=\tort n$, we get that $\frac{\ord(g_3)-\frac{\tort n}{2}}{\kn}>0$, so $\ord(g_3)=\tort n$.
\end{proof}

\begin{proposition}\label{keven}
If $\tort n$ is even, and $\ord(g_i)=\tort n$ for $i=1,2,3$, then for each group atom $\mathsf{m}\in\mathcal{B}(g_1,g_2,g_3)$ we have $|\mathsf{m}|\leq \tort n+\frac{n}{p}$. 
\end{proposition}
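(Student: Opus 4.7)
The plan is to invoke Lemma~\ref{ord}(iii) with $\ord(g_i)=\tort n$ for all $i\in\{1,2,3\}$, combined with a divisibility observation pinning down $\kn$. Since each $\sigma_i$ lies in $C_{n_2}\oplus\cdots\oplus C_{n_r}$, we have $\ord(\sigma_i)\mid n$, and so $\tort\mid\frac{\tort n}{\ord(\sigma_i)}$ for each $i$; hence $\tort\mid\kn$. In particular $\kn\geq\tort\geq 2$, so $\kn>1$ and Lemma~\ref{ord} is applicable. We may assume $|\mathsf{m}|>\tort n=\ord(g_1)$, as otherwise the claim is immediate. Substituting the orders into part (iii) yields
\[|\mathsf{m}|\leq\tort n+\frac{\tort n}{2\kn}.\]

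Write $\kn=\tort d$ with $d:=\gcd_{i=1,2,3}(n/\ord(\sigma_i))$, a positive divisor of $n$, so either $d=1$ or $d\geq p$. If $d\geq p$, then $\kn\geq\tort p$ and the inequality gives $|\mathsf{m}|\leq\tort n+\frac{n}{2p}<\tort n+\frac{n}{p}$. If $d=1$ and $p=2$ (equivalently $n$ is even), then $\kn=\tort$ gives $|\mathsf{m}|\leq\tort n+\frac{n}{2}=\tort n+\frac{n}{p}$. These two sub-cases dispatch everything except $d=1$ with $p\geq 3$; in this remaining sub-case $n$ must be odd, $\tort$ must be even (because $\tort n$ is even), and $\tort\geq 4$ since Lemma~\ref{ord}(iii) asserts $\kn=\tort\neq 2$.

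For the remaining sub-case, Lemma~\ref{ord}(iii) alone gives only $|\mathsf{m}|\leq\tort n+n/2$, which strictly exceeds the target $\tort n+n/p$. My plan is to pass to the quotient $\bar G:=G/K$, where $K$ is the 2-Sylow subgroup of the first cyclic direct summand $C_{\tort n}$; because all $n_i$ for $i\geq 2$ are odd, $K$ is the full 2-Sylow of $G$, so $\bar G$ has odd order and the images $\bar g_i$ have order $\tort' n$ with $\tort=2^a\tort'$, $\tort'$ odd. Upper bounds on group atoms in $\bar G$ --- from Proposition~\ref{knodd} when $\tort'>1$ (restricting possible parity patterns of the coordinates of a long group atom), and from Theorem~\ref{ptlns} applied to the rank-$\leq 3$ exponent-$n$ subgroup $\langle\bar g_1,\bar g_2,\bar g_3\rangle$ when $\tort'=1$ --- will force the image of $\mathsf{m}$ to be reducible in $\mathcal{B}(\bar g_1,\bar g_2,\bar g_3)$.

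The main obstacle will be the lifting step: a decomposition of $\mathsf{m}$ in the larger monoid $\mathcal{B}(\bar g_1,\bar g_2,\bar g_3)$ need not descend to one in $\mathcal{B}(g_1,g_2,g_3)$, because each summand $\mathsf{q}_j$ of a quotient-level decomposition only satisfies $\sum_i q_{j,i}g_i\in K$ rather than $=0$ in $G$. To bridge this gap, I would construct short compensating elements of $\mathcal{B}(g_1,g_2,g_3)$ whose partial sums in $G$ exhaust the cyclic group $K$, and combine them integrally with the $\mathsf{q}_j$'s to obtain a genuine integer decomposition of $\mathsf{m}$ in $\mathcal{B}(g_1,g_2,g_3)$ by elements of length strictly below $|\mathsf{m}|$, contradicting the group-atom assumption. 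Handling all parity patterns uniformly when $\tort'>1$, and keeping the compensating elements short enough throughout, will be the delicate technical work.
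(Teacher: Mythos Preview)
Your argument up to the ``remaining sub-case'' is fine, but that sub-case is where you go astray: you propose an elaborate quotient-and-lifting scheme that you acknowledge is incomplete, whereas in fact this case is simply empty. Recall that in the setup of Lemma~\ref{ord}, the subgroup $H$ of order $\kn$ is \emph{any} subgroup of $\langle g_1\rangle\cap\langle g_2\rangle\cap\langle g_3\rangle$, not only the one coming from the gcd formula. You have already shown that $\tort\mid\kn$, so the intersection contains a cyclic group of order $\tort$; since $\tort$ is even in your remaining sub-case, this cyclic group contains a subgroup of order $2$. Applying Lemma~\ref{ord}(iii) with \emph{that} $H$ gives $\kn=2$, a contradiction. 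Hence there is no group atom $\mathsf{m}$ with $|\mathsf{m}|>\tort n$ when $\tort$ is even, and the sub-case you were trying to handle never occurs.

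This is exactly the paper's route: it first rules out $\tort$ even by noting that $\frac{\tort n}{2}g_i$ is the unique order-$2$ element of the summand $C_{\tort n}$ for each $i$, giving a common $C_2$ in the intersection and hence a contradiction with Lemma~\ref{ord}(iii). With $\tort$ odd one is forced into $n$ even, $p=2$, and then the inequality $|\mathsf{m}|\le\tort n+\tfrac{\tort n}{2\kn}$ together with $\tfrac{\ord(g_3)}{\kn}\le n$ finishes immediately. Your quotient plan is therefore unnecessary; the missing idea is simply to exploit the freedom in the choice of $H$ in Lemma~\ref{ord}.
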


\begin{proof}
Suppose that $|\mathsf{m}|>\tort n$. If $\tort=2\ell'$ is even, then $\ord(g_i)=2\ell' n$, so $\langle \ell' ng_{1}\rangle=\langle \ell' ng_{2}\rangle=\langle \ell' ng_{3}\rangle\cong C_2$. However this contradicts Lemma \ref{ord} (iii), claiming that for a group atom $\mathsf{m}$ with $|\mathsf{m}|>\ord(g_1)$, the intersection $\langle g_1\rangle\cap\langle g_2\rangle\cap\langle g_3\rangle$ does not contain a  cyclic subgroup of order divisible by $2$.\par

So we can assume that $n$ is even, hence its minimal prime divisor is $p=2$. Proposition \ref{mgeqkn} (ii) implies that inequality \eqref{nodd} holds. So $\tort n+\frac{\tort n}{2\kn}\geq |\mathsf{m}|$, hence by Proposition \ref{o>1} (ii) we get that $|\mathsf{m}|\leq \tort n+\frac{n}{p}$. 
\end{proof}

\section{Proof of the main theroem}\label{sec:proof}

\begin{theorem}\label{ujt}
    Let $G=C_{n_1}\oplus ...\oplus C_{n_r}$ be an abelian group with $r\geq 2$, such that $1<\tort:=\frac{n_1}{n_2}$. Let $g_1,g_2,g_3\in G$ and $\mathsf{m}=[m_1,m_2,m_3]$ be an arbitrary group atom in $\mathcal{B}(g_1,g_2,g_3)$. Denote by $p$ the minimal prime divisor of $n=n_2$. Then 
    \[|\mathsf{m}|\leq\begin{cases}
        \tort n+\frac{n}{p} & \mbox{ if }r=2\\
        \tort n+\frac{n}{2} & \mbox{ if }r>2
    \end{cases}\]
\end{theorem}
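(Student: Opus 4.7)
After reordering coordinates (which preserves $|\mathsf{m}|$ and the group-atom property), assume $\ord(g_1)\geq\ord(g_2)\geq\ord(g_3)$. We may assume $|\mathsf{m}|>\tort n$, else the bound is immediate; since $\ord(g_i)\mid\exp(G)=\tort n$, this gives $|\mathsf{m}|>\ord(g_1)$. Proposition \ref{mgeqkn}(i) then forces $\ord(g_2)=\tort n$, hence also $\ord(g_1)=\tort n$.

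Next I would split on $\ord(g_3)$. If $\ord(g_3)\nmid n$, Proposition \ref{mgeqkn}(ii) gives $\ord(g_3)=\tort n$ and, via Proposition \ref{o>1}(ii), yields $|\mathsf{m}|\leq\tort n+n/2$. If $\ord(g_3)\mid n$ with $\ord(g_3)<n$, then $\ord(g_3)\leq n/p$, and Lemma \ref{m*geqm} gives $|\mathsf{m}|\leq\tort n+n/(2p)\leq\tort n+n/p$. If $\ord(g_3)=n$, Lemma \ref{m*geqm} gives $|\mathsf{m}|\leq\tort n+n/2$. Each of these yields $|\mathsf{m}|\leq\tort n+n/2$, settling the $r>2$ conclusion in all cases.

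For $r=2$ the bound must be sharpened to $\tort n+n/p$ in two subcases: (A) all three orders equal $\tort n$, and (B) $\ord(g_3)=n$. In (A), Proposition \ref{keven} handles $\tort n$ even directly; if $\tort n$ is odd then $G$ (of rank $2$) has odd order, and Proposition \ref{knodd} applies. Combining it with Lemma \ref{even} (used trivially with $t=0$ to exclude all-even patterns) restricts group atoms to parity patterns with $0$ or $2$ even coordinates among $(m_1,m_2,m_3)$; feeding this into Lemma \ref{ord}(iii) (which under $\ord(g_i)=\tort n$ yields $|\mathsf{m}|\leq\tort n+\tort n/(2\kn)$), together with Proposition \ref{o>1}(ii) and the divisibility $\tort\mid\kn$, produces the desired bound.

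\textbf{The main obstacle} is case (B) with $p$ odd, where neither Proposition \ref{keven} nor Proposition \ref{knodd} applies directly and Lemma \ref{m*geqm} only yields $\tort n+n/2$. I would close this gap by adapting the construction in the proof of Proposition \ref{knodd}: from the elements $[xn,n,0]$ and $[n,yn,0]$ arising from the $n$-torsion relations between $g_1$ and $g_2$, together with $[0,0,n]\in\mathcal{B}(g_1,g_2,g_3)$ (available because $\ord(g_3)=n$), one builds an integral linear combination $\linkomb=\lambda_1\mathsf{q}_1+\cdots+\varepsilon\mathsf{m}\in\mathcal{B}(g_1,g_2,g_3)$ with all coordinates even and $|\linkomb|<2|\mathsf{m}|$, then argues that $\linkomb/2$ lies in $\mathcal{B}(g_1,g_2,g_3)$, contradicting $\mathsf{m}$ being a group atom. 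Since $n$ is odd, the $g_3$-coordinate of $\linkomb/2$ halves cleanly; the delicate part is handling potential $2$-torsion of $G$ in the $C_{\tort n}$ summand (present when $\tort$ is even, which is compatible with $p>2$ since $n$ is odd), which forces a sub-case analysis on the parities of $x$, $y$, $\tort$ and on the sizes of $m_i$ relative to $n$, in the spirit of the seven-case split of Proposition \ref{knodd}.
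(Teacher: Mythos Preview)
Your overall case split matches the paper's, and the $r>2$ bound is fine. There are two genuine problems in the $r=2$ refinement.

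\textbf{Case (A), $\tort n$ odd.} Your plan ``exclude the parity patterns with one or three even coordinates, then apply Lemma~\ref{ord}(iii)'' does not reach $\tort n+n/p$. Under $\ord(g_i)=\tort n$ that lemma gives $|\mathsf m|\le \tort n+\tort n/(2\kn)$, and your observation $\tort\mid\kn$ (correct, since each $\ord(\sigma_i)\mid n$) only yields $|\mathsf m|\le \tort n+n/(2k)$ with $\kn=\tort k$, $k\ge 1$. For odd $n$ one has $p\ge 3$, so $n/(2k)\le n/p$ would require $k\ge 2$, which need not hold (e.g.\ if each $\sigma_i$ has order $n$ then $\kn=\tort$, $k=1$). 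The parity restriction you retain is never fed into Lemma~\ref{ord}(iii), so it cannot sharpen the bound. The paper's remedy is much simpler and avoids Lemma~\ref{ord} here entirely: the two remaining parity types are also killed by Lemma~\ref{even}. If $m_1,m_2,m_3$ are all odd, take $\mathsf b=\sum_i\ord(g_i)\mathsf e_i-\mathsf m$; if exactly one $m_j$ is odd, take $\mathsf b=\ord(g_j)\mathsf e_j+\mathsf m$. In both cases $\mathsf b$ has all even coordinates and $|\mathsf b|<2|\mathsf m|$ (using $|\mathsf m|>\tort n$), so Lemma~\ref{even} shows $\mathsf m$ is not a group atom. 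Thus all four parity types are excluded, and in fact no group atom with $|\mathsf m|>\tort n$ exists in this subcase.

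\textbf{Case (B), $\ord(g_3)=n$.} You treat this as the main obstacle and sketch a lengthy adaptation of the Proposition~\ref{knodd} construction, explicitly flagging the ``delicate'' possibility that $\tort$ is even (so $|G|$ is even and Lemma~\ref{even} is unavailable). This whole detour is unnecessary: Lemma~\ref{kn+gcd} with $s=1$ applies directly, since $\ord(g_1)=\ord(g_2)=n_1=\tort n$ and the minimal prime divisor of $\ord(g_3)=n$ is $p$; it immediately gives $|\mathsf m|\le \tort n+n/p$. That is exactly how the paper closes this case in one line.
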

\begin{proof}
Suppose that $|\mathsf{m}|>\tort n$. Then Proposition \ref{mgeqkn} (i) implies that $\ord(g_1)=\ord(g_2)=\tort n$, hence by Proposition \ref{mgeqkn} (ii) we get that either $\ord(g_1)=\ord(g_2)=\tort n$ with $\ord(g_3)\mid n$, or $\ord(g_i)=\tort n$ for $i=1,2,3$ holds.\par

Suppose that $\ord(g_1)=\ord(g_2)=\tort n$ with $\ord(g_3)\mid n$. Lemma \ref{m*geqm} implies that $2\tort n+\ord(g_3)\geq 2|\mathsf{m}|$, hence $|\mathsf{m}|\leq \tort n+\frac{n}{2}$. Moreover, if $r=2$ and $\ord(g_3)<n$ then $\ord(g_3)|n$ implies that $\ord(g_3)\leq\frac{n}{p}$, hence $|\mathsf{m}|\leq \tort n+\frac{\ord(g_3)}{2}<\tort n+\frac{n}{p}$. If $r=2$ and $\ord(g_3)=n$, then $|\mathsf{m}|\leq \tort n+\frac{n}{p}$ by Lemma \ref{kn+gcd}.
So we can turn to the case when $\ord(g_i)=\tort n$ for $i=1,2,3$.\par

Suppose that $\tort n$ is odd. If there was exactly one even number among $m_1,m_2$ and $m_3$, then $\mathsf{m}$ could not be a group atom by Proposition \ref{knodd}. If $m_1,m_2$ and $m_3$ were all even, then $\mathsf{m}$ itself would fulfill the conditions (i), (ii) of Lemma \ref{even}, while if $m_1,m_2$ and $m_3$ were all odd, then $\ord(g_1)\mathsf{e_1}+\ord(g_2)\mathsf{e_2}+\ord(g_3)\mathsf{e_3}-\mathsf{m}$ would fulfill the conditions (i), (ii) of Lemma \ref{even}. Finally, if there was exactly one odd number among $m_1,m_2$ and $m_3$ (assume that this one was $m_1$), then $\ord(g_1)\mathsf{e_1}+\mathsf{m}$ would fulfill the conditions (i), (ii) of Lemma \ref{even}. So Lemma \ref{even} would imply that $\mathsf{m}$ could not be a group atom.\par
Hence remains the case when $\tort n$ is even. Then $|\mathsf{m}|\leq \tort n+\frac{n}{p}$ by Proposition \ref{keven}, so we are done.
\end{proof}

\begin{proofof}{Theorem \ref{mainthm}}
If $\tort=1$, then $\beta_{sep}(C_{n}\oplus C_{n})=n+\frac{n}{p}$ by Theorem \ref{ptlns}.  So we can assume that $\tort>1$. Lemma \ref{bsgeqsumni} implies that $\beta_{sep}(C_{\tort n}\oplus C_{n})\geq \tort n+\frac{n}{p}$. By Lemma \ref{md2} we are searching for the maximal possible length of a group atom $\mathsf{m}$ in a monoid $\mathcal{B}(g_1,...,g_k)$, where $\{g_1,\dots,g_k\}$ ranges over all subsets of size $k\leq 3$ of the group $C_{\tort n}\oplus C_{n}$. If $|\supp(\mathsf{m})|=1$, then $|\mathsf{m}|\leq \tort n$. If $|\supp(\mathsf{m})|=2$, then $|\mathsf{m}|\leq \max\{\ord(g_1),\ord(g_2),\frac{\ord(g_1)+\ord(g_2)}{2}\}\leq\tort n$ by Lemma \ref{m*geqm}. If $|\supp(\mathsf{m})|=3$, then $|\mathsf{m}|\leq \tort n+\frac{n}{p}$ by Theorem \ref{ujt}.\par
So $\beta_{sep}(C_{\tort n}\oplus C_{n})\leq\tort n+\frac{n}{p}$, giving that $\beta_{sep}(C_{\tort n}\oplus C_{n})=\tort n+\frac{n}{p}$.
\end{proofof}

\end{document}